\def \zmontar{\buildrel}
\def \za{\alpha}
\def \zg{\gamma}
\def \zd{\delta}
\def \ze{\varepsilon}
\def \zl{\lambda}
\def \zm{\mu}
\def \zx{\xi}
\def \zp{\pi}
\def \zr{\rho}
\def \zs{\sigma}
\def \zt{\tau}
\def \zf{\varphi}
\def \zq{\psi}
\def \zw{\omega}
\def \zG{\Gamma}
\def \zF{\Phi}
\def \zlma{\ell}
\def \zsu{\sum}
\def \zin{\cap}
\def \zun{\cup}
\def \zung{\bigcup}
\def \zte{\otimes}
\def \zmm{\pm}
\def \zpu{\cdot}
\def \zpor{\times}
\def \zci{\circ}
\def \zmei{\leq}
\def \zmai{\geq}
\def \zco{\subset}
\def \zpe{\in}
\def \zeq{\equiv}
\def \znoi{\neq}
\def \znoco{\not\subset}
\def \znope{\not\in}
\def \zpar{\partial}
\def \zinf{\infty}
\def \zva{\emptyset}
\def \zfl{\rightarrow}
\def \zbv{|}
\def \zdbv{||}
\def \z/{\over}
\newcommand {\CC}{\mathbb C}
\newcommand {\RR}{\mathbb R}
\newcommand {\NN}{\mathbb N}
\newcommand {\A}{\mathcal A}
\newcommand {\F}{\mathcal F}
\newcommand {\co}{\colon}
 \def\mylabel#1{\label{#1}}   
\newtheorem{theorem}{Theorem}[section]
\newtheorem*{theorem*}{Theorem}
\newtheorem{lemma}[theorem]{Lemma}
\newtheorem{corollary}[theorem]{Corollary}
\newtheorem*{corollary*}{Corollary}
\newtheorem{proposition}[theorem]{Proposition}
\newtheorem*{example*}{Example}
\theoremstyle{definition}
\newtheorem{remark}[theorem]{Remark}
\newtheorem{example}[theorem]{Example}
\newcommand{\Aut}{\operatorname{Aut}}
\newcommand{\Diff}{\operatorname{Diff}}
\newcommand{\Homeo}{\operatorname{Homeo}}
\title[Finite groups of diffeomorphisms determined by a vector field]{Finite groups of diffeomorphisms are topologically determined by a vector field}
\author{F.J.~Turiel}
\address[F.J.~Turiel]{
Departamento de {\'A}lgebra, Geometr{\'\i}a y Topolog{\'\i}a,
Facultad de Ciencias,
Campus de Teatinos, s/n,
29071-M{\'a}laga, Spain}
\email[F.J.~Turiel]{turiel@uma.es}
\author{A.~Viruel}
\address[A.~Viruel]{
Departamento de {\'A}lgebra, Geometr{\'\i}a y Topolog{\'\i}a,
Facultad de Ciencias,
Campus de Teatinos, s/n,
29071-M{\'a}laga, Spain}
\email[A.~Viruel]{viruel@uma.es}
\thanks{Authors are partially supported by
MEC-FEDER grant MTM2016-78647-P, and JA grant FQM-213}
\begin{document}

\begin{abstract}
In a previous work it is shown that every finite group $G$ of diffeomorphisms
of a connected smooth manifold $M$ of dimension $\geq 2$ equals, up to
quotient by the flow, the centralizer of the group of smooth automorphisms
of a $G$-invariant complete vector field $X$ (shortly $X$ describes $G$).
Here the foregoing result is extended to show that every finite group of
diffeomorphisms of $M$ is described, {\em within the group of all
homeomorphisms of $M$}, by a vector field.

As a consequence, it is proved that a finite group of homeomorphisms of
 a compact connected topological $4$-manifold, whose action is free,
 is described by a continuous flow.
\end{abstract}

\maketitle

\section{Introduction} \mylabel{sec-1}

The study of the automorphism group, or centralizer, of a complete vector field $X$ of class
$C^r$, $r\zmai 1$, or more precisely that of its quotient by the flow of $X$, is a classical question
with a great amount of interesting results. Often these quotient groups are trivial or almost trivial if
a reasonable  hypothesis of transversality is imposed.

Therefore it is natural to consider the inverse point of view (the inverse Galois problem):
given a group of diffeomorphisms $G$ do construct a complete vector field $X$ whose automorphism group, up to quotient by the flow, equals $G$ (shortly one will say that $X$ determines
or describes $G$). Notice that this last problem can be  addressed in topological manifolds
and homeomorphisms by replacing the vector field by a continuous flow.

In \cite{TV} it is shown that every finite group of diffeomorphisms of a smooth connected
manifold is determined, within the group of all diffeomorphisms, by a vector field. Here
the foregoing result is extended to show that every finite group of diffeomorphisms can be
described, {\em within the group of all homeomorphisms}, by a vector field.

\begin{theorem}\mylabel{mainth}
Let $M$ be a connected $C^{\zinf}$ manifold  of dimension $m\zmai 2$,
and $G$ be a finite subgroup  of diffeomorphisms of $M$. Then there exists $X$, a complete $G$-invariant vector field on $M$, such that the map
$$\begin{array}{rcl}
G\zpor\RR & \to & \Aut_{0}(X)\\
(g,t) & \mapsto & g\zci\zF_t
\end{array}$$
is a group isomorphism, where $\zF$  and $\Aut_{0}(X)$ denote the flow
and the group of continuous automorphisms of $X$ respectively.
\end{theorem}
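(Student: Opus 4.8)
The plan is to take for $X$ the $G$-invariant complete vector field produced in \cite{TV} (refining its local structure if necessary), for which the same map $(g,t)\mapsto g\zci\zF_{t}$ is already known to be an isomorphism onto the group of \emph{smooth} automorphisms of $X$, and to upgrade this isomorphism to the group of \emph{all} continuous automorphisms. Since every $g\zci\zF_{t}$ is a diffeomorphism, and since the $G$-invariance of $X$ gives $g\zci\zF_{t}=\zF_{t}\zci g$ for all $g\zpe G$ and $t\zpe\RR$, the map is a group homomorphism into $\Aut_{0}(X)$; its injectivity is inherited verbatim from the smooth statement, because equality of two members of the image is equality of diffeomorphisms. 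Thus the whole content is \emph{surjectivity}: one must show that every homeomorphism $h$ of $M$ satisfying $h\zci\zF_{t}=\zF_{t}\zci h$ for all $t$ has the form $g\zci\zF_{t}$.

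First I would exploit that such an $h$ preserves every purely topological invariant of the flow $\zF$: the zero set of $X$ (the fixed-point set of the flow), the periodic orbits together with their periods, the $\za$- and $\zw$-limit structure, and in particular the finitely many distinguished orbits and invariant components that the construction of \cite{TV} introduces precisely in order to encode $G$. Reading off how $h$ permutes this finite combinatorial skeleton produces an element $g\zpe G$, and after replacing $h$ by $g^{-1}\zci h$ we may assume that $h$ fixes the skeleton orbit by orbit. The task then reduces to a \emph{rigidity} statement: a continuous automorphism fixing the distinguished structure must be a single time-$t$ map $\zF_{t}$.

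The heart of the matter, and the step I expect to be the main obstacle, is exactly this topological rigidity. In the smooth category it is controlled by linearisations at the zeros and by derivatives of return maps, tools unavailable for a merely continuous $h$; indeed the centraliser of a flow \emph{in} $\Homeo(M)$ near a generic fixed point (sink, source, or hyperbolic saddle) is vastly larger than its centraliser in $\Diff(M)$, since the transverse directions can be remapped freely. The construction must therefore be arranged so that the local model of $X$ at each distinguished orbit is \emph{topologically} rigid, meaning the only self-homeomorphisms commuting with the local flow and respecting the local data are the flow maps themselves. Granting such local rigidity, one propagates it: because $h$ commutes with $\zF$, its behaviour off the skeleton is determined by its effect on local transversals to the regular orbits, the rigidity at the distinguished orbits forces the transverse part of $h$ to be trivial, and the longitudinal part to be a time shift that is locally constant. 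Connectedness of $M$ (with $m\zmai 2$ providing the room for these transversals) then makes the shift a single constant $t$, so that $h=\zF_{t}$ and the original homeomorphism equals $g\zci\zF_{t}$, establishing surjectivity and hence the isomorphism.
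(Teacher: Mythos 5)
There is a genuine gap, and it sits exactly where you place your ``granting such local rigidity'' clause. Your starting point is already refuted by the paper itself: Example \ref{exaMor} shows that for the vector fields of \cite{TV} (gradients of $G$-invariant Morse functions, possibly with finitely many extra index-zero zeros) the group $\Aut_{0}(X)$ is \emph{strictly} larger than $G\zpor\RR$ --- one takes a flow box $E\cong D\zpor\RR$ over a transversal disk $D$ and any continuous $\zl\co D\zfl\RR$ vanishing on $\zpar D$, and $f(x,s)=(x,s+\zl(x))$ extends by the identity to a continuous automorphism. So no amount of ``local rigidity at the distinguished orbits'' can be propagated to kill the transverse freedom: the obstruction is not local at the zeros but global, and the sentence ``the rigidity at the distinguished orbits forces the transverse part of $h$ to be trivial'' is precisely the assertion that fails for the \cite{TV} fields. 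You have correctly reduced the theorem to surjectivity and correctly diagnosed that the continuous centralizer of a flow is a priori huge, but you then assume the conclusion rather than supply a mechanism.

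What the paper actually does is build a new field in which a \emph{dense} family of individual trajectories is topologically distinguishable, so that a continuous automorphism is pinned down orbit by orbit on a dense set. Concretely (after reducing to the almost free case $I_G=I_G^{\max}$, the general case being handled by deleting the bad isotropy $J_G\zun S_1$ and extending by zero): one modifies a gradient field near each minimum to create one \emph{pseudo-circle} per minimum; one then chooses a countable dense $G$-invariant family of trajectories $T_{nk}$, with the $G$-action on them free, and inserts $n-1$ new zeros (\emph{rivets}) on each $T_{nk}$ so that it becomes a \emph{chain} of order $n$ ending at a pseudo-circle. Chains, their order, rivets and pseudo-circles are invariants of $\Aut_0$, there are exactly $\zlma$ chains of each order $n$ forming one free $G$-orbit, and the union of their last links is dense. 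Hence any $f\zpe\Aut_0(X)$, after composing with some $g\zpe G$, fixes each such trajectory setwise, so equals $\zF_{t_i}$ on each \emph{roll} $R_i$ (Proposition \ref{pro-2}); a connectedness argument (Lemmas \ref{lem-4} and \ref{lem-5}) forces all the $t_i$ to coincide, and Lemma \ref{lem-6} removes the residual pointwise ambiguity in $G$. This dense-marking device is the missing idea in your proposal; without it (or an equivalent), the surjectivity step does not go through.
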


An immediate consequence of Theorem \ref{mainth} is that any smoothable finite group of homeomorphisms of a connected topological
manifold is determined by a continuous flow.

\begin{corollary}\mylabel{mainco}
Let $G$ be a smoothable finite group of homeomorphisms of a connected topological manifold
$E$ of dimension $\zmai 2$. Then there  exists a $G$-invariant continuous flow
 $\zq\co\RR\zpor E\zfl E$  such that the map
$$\begin{array}{rcl}
G\zpor\RR & \to & \Aut_{0}(\zq)\\
(g,t) & \mapsto & g\zci\zq_t
\end{array}$$
is a group isomorphism, where  $\Aut_{0}(\zq)$ denotes the group of continuous automorphisms
of $\zq$.
\end{corollary}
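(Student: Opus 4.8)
The plan is to transport, via a topological conjugacy, the vector field produced by Theorem \ref{mainth} from a smooth model onto $E$, retaining only its flow. By definition of smoothable, there exist a connected $C^{\zinf}$ manifold $M$ and a homeomorphism $h\co E\zfl M$ such that $\widetilde{G}:=h\,G\,h^{-1}$ is a finite subgroup of $\Diff(M)$; since $h$ is a homeomorphism, $M$ is connected and $\dim M=\dim E\zmai 2$. Applying Theorem \ref{mainth} to $\widetilde{G}$ yields a complete $\widetilde{G}$-invariant vector field $X$ on $M$, whose flow $\zF\co\RR\zpor M\zfl M$ is continuous and for which $(\widetilde{g},t)\mapsto \widetilde{g}\zci\zF_t$ defines a group isomorphism $\widetilde{G}\zpor\RR\zfl\Aut_{0}(X)$.

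Next I would define the candidate flow on $E$ by $\zq_t:=h^{-1}\zci\zF_t\zci h$. As a composite of continuous maps this is a continuous flow, and it is $G$-invariant: invariance of $X$ forces each $\widetilde{g}\zpe\widetilde{G}$ to commute with $\zF_t$, so for $g\zpe G$ one gets $g\zci\zq_t=h^{-1}(\widetilde{g}\zci\zF_t)h=h^{-1}(\zF_t\zci\widetilde{g})h=\zq_t\zci g$.

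The heart of the argument is that $\Aut_{0}$ depends only on the flow and is therefore a topological invariant: a continuous automorphism of $X$ is exactly a homeomorphism of $M$ commuting with $\zF_t$ for every $t$, and the same description defines $\Aut_{0}(\zq)$. Hence conjugation by $h$, namely $f\mapsto h^{-1}\zci f\zci h$, restricts to a group isomorphism $\Aut_{0}(X)\zfl\Aut_{0}(\zq)$. A direct computation gives $h^{-1}\zci(\widetilde{g}\zci\zF_t)\zci h=(h^{-1}\widetilde{g}\,h)\zci(h^{-1}\zF_t h)=g\zci\zq_t$, so composing the isomorphism of Theorem \ref{mainth} with this conjugation and with the obvious isomorphism $G\zpor\RR\zfl\widetilde{G}\zpor\RR$, $(g,t)\mapsto(hgh^{-1},t)$, shows that $(g,t)\mapsto g\zci\zq_t$ is a group isomorphism $G\zpor\RR\zfl\Aut_{0}(\zq)$, as required.

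The only delicate point is conceptual rather than computational: because $h$ is merely a homeomorphism it cannot push $X$ forward to a vector field on $E$, so one must discard the vector field and work with the flow alone, relying on the topological nature of $\Aut_{0}$. Once this is granted, checking that $\zq$ is $G$-invariant and that the direct-product group structure underlying $g\zci\zF_t$ is carried faithfully to $g\zci\zq_t$ is routine.
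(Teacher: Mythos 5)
Your proof is correct and follows the route the paper intends (the paper calls the corollary an immediate consequence of Theorem \ref{mainth} and gives no separate proof): apply the theorem to the smoothed action and observe that $\Aut_{0}$ of the resulting flow is a purely topological notion, so it is preserved under conjugation by a homeomorphism. Note only that the paper's definition of smoothable places the smooth structure directly on $E$ itself, compatibly with its topology, so your conjugating homeomorphism $h$ may be taken to be the identity; this collapses your argument to ``take $\zq$ to be the flow of the vector field given by Theorem \ref{mainth}'' but changes nothing essential.
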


\begin{remark}\mylabel{mainre}
While it is a classical result that any finite group of homeomorphisms of a compact surface is
smoothable, the situation in dimension three is not straightforward: not every finite group $G$ of homeomorphisms of a
$3$-dimensional topological compact manifold is smoothable \cite{ BI}. Indeed $G$ is smoothable if and only if it is locally linear \cite[Theorem 2.1 and Remark 2.4]{KL}.

Therefore the corollary above applies to connected compact surfaces and, in the case of topological connected compact $3$-manifolds, if $G$ is locally linear.

For a generalization of Corollary \ref{mainco} to some cases of non-smoothable actions see
Example \ref{exaSph} and Theorem \ref{exaTop}. In this last one we show that a finite group of
homeomorphisms of a compact connected topological $4$-manifold, whose action is free, is described by a continuous flow.
\end{remark}

\noindent \textbf{Terminology:} We assume the reader is familiarized with our previous
paper \cite{TV}. All structures and objects considered in this work are smooth, i.e.\ real
$C^{\zinf}$, and manifolds are without boundary, unless another thing
is stated. Whenever we say a set is countable we mean the set is either a finite set or a countably infinite set.
For the general questions on Differential Geometry the reader is
referred to \cite{KN} while we refer to \cite{HI} for basic facts on Differential Topology.

\section{Some preliminary notions}\mylabel{sec-2}

Given a vector field $Z$ on an $m$-manifold $M$, a {\em continuous automorphism of} $Z$
is a homeomorphism $f\colon M\zfl M$ which maps integral curves of
$Z$ into integral curves of $Z$ (i.e.\ if $\zg(t)$ is an integral curve
of $Z$ then $f(\zg(t))$ is so.) The set $\Aut_{0}(Z)$ of all continuous automorphisms
of $Z$ is a subgroup of the group of homeomorphisms of $M$.
If $Z$ is complete and $\zF_t$ denotes its flow, then $f\zpe\Aut_0 (Z)$ if and only if
$f\zci\zF_t =\zF_t \zci f$ for any $t\zpe\RR$.

In a more general setting, given a topological space $E$ a {\em continuous flow} is
a continuous map $\zq\co\RR\zpor E\zfl E$ such that $\zq_0 =Id$ and
$\zq_{t+s}=\zq_t \zci\zq_s$ for each $t,s\zpe\RR$. As before, $\Aut_0 (\zq)$ is the group of all
homeomorphisms  $f\co E\zfl E$ such that $f\zci\zq_t =\zq_t \zci f$, $t\zpe\RR$. We say
that a subset $S$ of $\Homeo(E)$ is {\em smoothable} if there exists a structure of smooth
manifold on $E$ which is compatible with the preexisting topology and makes every element of $S$
a diffeomorphism.

Returning to the smooth framework again, given a vector field $Z$ on an $m$-manifold $M$, a {\em pseudo-circle} of $Z$ is a
subspace of $M$ which is homeomorphic to $S^1$ and consists
of a regular trajectory of $Z$ and an isolated singularity. In this case the $\za$-limit
and the $\zw$-limit of the regular trajectory is the singular point.

Let $B(r)$ be the open ball in $\RR^m$ centered at the origin and
radius $r>0$. For the purpose of this work, we will say that $p\zpe M$ is a
{\em source} of $Z$ if there exists an open neighborhood of this point which is diffeomorphic to
an open ball $B(r)$, with $p\zeq 0$, such that in the coordinates given by the
diffeomorphism
$$ Z=\zf\zpu\left(\zsu_{j=1}^m x_j {\frac {\zpar} {\zpar x_j}}\right)$$
where
\begin{enumerate}[label={\rm (\arabic{*})}]
\item  $\zf$ is a non-negative function and $\zf^{-1}(0)$ is countable, and

\item\mylabel{especial1} on each ray issuing from the origin there are at most a finite number
of zeros of $\zf$.
\end{enumerate}

By condition \ref{especial1}, for every ray issuing from the origin there exists just one regular
trajectory whose $\za$-limit is $p\zeq 0$ and that near the origin lies along this ray.

A point $q\zpe M$ is called a {\it rivet} if the following hold:
\begin{enumerate}[label={\rm (\alph{*})}]
\item $q$ is an isolated singularity of $Z$,

\item\mylabel{especial3} around $q$ one has $Z=\zq{\tilde Z}$ where $\zq$ is a function and ${\tilde Z}$ a vector field with ${\tilde Z}(q)\znoi 0$, and

\item\mylabel{especial3a} no trajectory has $q$ as $\za$-limit and
$\zw$-limit at the same time.
\end{enumerate}

Note that by \ref{especial3} and \ref{especial3a}, any rivet is the $\zw$-limit of exactly one
regular trajectory, the $\za$-limit of another different one and moreover, it is an isolated
singularity of index zero.

A {\em topological rivet} means an isolated singularity of $Z$ that is the $\za$-limit
of a single regular trajectory, the $\zw$-limit of another single regular trajectory and
both trajectories are different. As one would expect, any rivet is a topological rivet.

By definition, a {\it chain} of $Z$ is a finite and ordered
sequence of three or more different regular trajectories, each of
them called a {\it link}, such that:

\begin{enumerate}[label={\rm (\alph{*})}]
\item The $\za$-limit of the first link is a source or empty.

\item The $\zw$-limit of the last link is a pseudo-circle.

\item Between two consecutive links the $\zw$-limit of the
first one equals the $\za$-limit of the second one. Moreover this
set consists in a rivet.
\end{enumerate}

The  number of links defining a chain is called the {\it order of the chain}. The {\it
$\zw$-limit} of a chain is that of its last link.

Given a subset $Q\subset M$, we say that {\it the dimension of $Q$  does not exceed
 $\zlma$}, or $Q$ {\it can be enclosed in dimension
$\zlma$}, if there exists a countable collection $\{
N_{\zl}\}_{\zl\zpe L}$ of submanifolds of $M$, all of them of
dimension $\zmei\zlma$, such that $Q\zco \zung_{\zl\zpe
L}N_{\zl}$. Note that the countable union of sets whose dimension
does not exceed $\zlma$, does not exceed dimension $\zlma$
too. On the other hand, if the dimension of $Q$  does not exceed $\zlma<m$ then $Q$ has measure zero and therefore $Q$ has
empty interior.

Let us give the last definition of this section. A vector field $Z$ on $M$
is called {\em limit} (abbreviation of ``with an almost controlled
$\zw$-limit'') if the following conditions hold:

\begin{enumerate}[label={\rm (\roman{*})}]
\item The set of zeros of $Z$ is discrete (that is with no accumulation point).
\item $Z$ has exactly one pseudo-circle.
\item\mylabel{especial2} There exists a set $Q\zco M$ whose dimension does not exceed $m-1$
such that the trajectory of any point of $M-Q$ has the
pseudo-circle as $\zw$-limit.
\item $Z$ has no chain and no periodic regular trajectory.
\end{enumerate}

By \ref{especial2} the union of the trajectory of any point of $M-Q$ and the pseudo-circle
is a connected set, hence the $Z$-saturation of $M-Q$ together with the pseudo-circle
is a connected set too. Therefore $M=\overline{M-Q}$ is connected.
Moreover $dim\, M\zmai 2$, otherwise $M$ equals the pseudo-circle  and
\ref{especial2} cannot hold.

\begin{proposition}\mylabel{pro-1}
Each sphere $S^k$, $k\zmai 2$, supports a limit vector field.
\end{proposition}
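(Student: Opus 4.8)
The plan is to obtain $X$ in two stages: first build on $S^k$ a field whose only recurrence is a single attracting periodic orbit $C$, and then break $C$ at one point into a pseudo-circle by multiplying by a nonnegative function that vanishes there. For the first stage I would write $\RR^k=\RR^2\times\RR^{k-2}$ with coordinates $(y,w)$, $y=(y_1,y_2)$, and take the field $Z_0$ whose $\RR^2$-part is a planar field with the single attracting cycle $C=\{|y|=1\}\times\{0\}$ and repelling origin (in polar coordinates $\dot r=r(1-r^2)$, $\dot\theta=1$) and whose $\RR^{k-2}$-part is the contraction $\dot w=-w$. Multiplying $Z_0$ by a strictly positive function $\chi$ decaying fast enough at infinity extends the field smoothly to the one--point compactification $S^k=\RR^k\cup\{\infty\}$, with $\infty$ a repelling isolated zero; since $\chi>0$ this alters neither the trajectories nor their limit sets. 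Denote the resulting field (automatically complete, as $S^k$ is compact) by $Z_1$.

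The dynamics of $Z_1$ is elementary. Its only zeros are the origin and $\infty$, both nondegenerate: $\infty$ is a source, and the origin is a source when $k=2$ and a saddle with two--dimensional unstable and $(k-2)$--dimensional stable manifold when $k>2$. A periodic orbit must satisfy $w\equiv 0$ by the contraction, and in the plane this field has $C$ as its only cycle; hence $C$ is the unique periodic orbit. Finally the basin of $C$ is all of $S^k$ except $\{y=0\}\cup\{\infty\}$, a finite union of submanifolds of dimension $\le k-2$; every orbit outside this set spirals onto $C$ and so has the whole circle $C$ as $\omega$-limit.

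Now fix $p\in C$, $p\neq 0,\infty$, choose a smooth $\psi\colon S^k\to[0,\infty)$ with $\psi^{-1}(0)=\{p\}$, and set $X=\psi Z_1$. Multiplication by the nonnegative $\psi$ keeps every trajectory missing $p$ unchanged as an oriented set and breaks $C$ at $p$: since $Z_1(p)\neq 0$ and $\psi$ vanishes only at $p$, the point $p$ becomes an isolated singularity and $\gamma:=C\setminus\{p\}$ a single regular trajectory with $\alpha(\gamma)=\omega(\gamma)=p$, so $C$ is a pseudo-circle. Crucially, because $\psi$ vanishes at the one point $p$, on any trajectory avoiding $p$ one has $\psi>0$ and only finite time is spent on each compact arc; thus every generic orbit still runs over its entire forward $Z_1$-trajectory, and completeness on the compact $S^k$ forces its $\omega$-limit to remain the full circle $C$.

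It remains to verify the four conditions. The zeros of $X$ are exactly $\{0,\infty,p\}$, hence discrete; the set $Q=\{y=0\}\cup\{\infty\}\cup C$ has dimension $\le k-1$ and gives condition \ref{especial2}. There is exactly one pseudo-circle: no regular orbit is homoclinic to $0$ or to $\infty$ (orbits leaving the origin tend to $C$, those reaching it come from $\infty$, and $\infty$ is a pure source), and the only orbit having $p$ in both limit sets is $\gamma$. No periodic regular trajectory survives, the sole cycle having been broken. And there is no rivet: the origin and $\infty$ are honest isolated zeros, not of the form $\psi\tilde Z$ with $\tilde Z$ nonvanishing, while $p$ fails rivet condition \ref{especial3a} precisely because $\gamma$ has $p$ as both $\alpha$- and $\omega$-limit; a fortiori there is no chain. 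The delicate points, and the ones I would expect to absorb the real work, are the two analytic ones: the smooth extension of the planar field across $\infty$ (handled by the positive factor $\chi$, using that positive rescalings preserve trajectories), and the verification that breaking $C$ creates no hypersurface of spurious limit points --- which is exactly why $\psi$ is required to vanish at the single point $p$ rather than along a hypersurface.
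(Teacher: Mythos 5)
Your construction is correct, but it follows a genuinely different route from the paper's. The paper argues by induction on $k$: both poles of $S^k$ are sources, all flow is pushed towards the equator, and on the equatorial $S^{k-1}$ one inserts (inductively) a limit vector field, the base case $k=2$ being a rotation of the equator broken at one point. You instead give a direct, induction-free construction: on $\RR^k=\RR^2\times\RR^{k-2}$ a hyperbolic attracting limit cycle crossed with a linear contraction, transplanted to $S^k$ via the one-point compactification, and then the cycle is broken at a single point. The final trick is the same in both proofs (and is exactly the paper's $k=2$ step): multiplying by a nonnegative function vanishing at one point of a closed invariant curve turns it into a pseudo-circle without disturbing any other orbit or its $\omega$-limit, because the reparametrized time still diverges on every arc avoiding the new zero. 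What your version buys is an explicit global phase portrait — the set $Q=\{y=0\}\zun\{\zinf\}\zun C$ in condition \ref{especial2} is a finite union of submanifolds of dimension $\zmei k-1$, there are only three singular points, and no induction hypothesis has to be unwound — and it slots into Section \ref{sec-3.2} just as well, since only the four defining properties of a limit field are used there. The price is the one step you correctly flag as delicate: extending $\zj Z_0$ smoothly across $\zinf$. This needs the Poincar\'e-compactification computation — in the inverted chart the cubic field blows up polynomially, so $\zj$ must be chosen flat at infinity (e.g.\ $\zj=e^{-\zdbv x\zdbv^2}$, which becomes $e^{-1/\zdbv u\zdbv^2}$ and dominates every polynomial singularity together with all its derivatives). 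That is standard but should be written out; the paper's inductive construction avoids it entirely by never leaving the sphere. The remaining verifications (only periodic orbit of $Z_1$ is $C$ by the contraction in $w$ and monotonicity of $r$; no rivets because the hyperbolic zeros have nondegenerate linearization and $p$ violates condition \ref{especial3a}; hence no chains) are all sound.
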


\begin{proof}
On $S^{k}\zco\mathbb R^{k+1}$ consider the vector field
$$\zx=-\zsu_{j=1}^{k}x_{j}x_{k+1}(\zpar/\zpar x_{j})+
(1-x_{k+1}^{2})(\zpar/\zpar x_{k+1}),$$ orthogonal projection of the vector field $\zpar/\zpar x_{k+1}$ onto
the sphere, whose
trajectories go from the south pole to the north one. Since $\zx$
is transverse to the equator $E=\{ x\zpe S^{k}\zbv x_{k+1}=0\}$,
one may identify an open neighborhood $A$ of $E$ to
$(-\ze,\ze)\zpor S^{k-1}$, endowed with coordinates
$(t,y)=(t,y_{1},\dots,y_{k})$ where $S^{k-1}\zco {\mathbb R}^{k}$,
in such a way that $E$ corresponds to $\{0\}\zpor S^{k-1}$ and
$\zx=\zpar/\zpar t$.  Thus the north band is given by $t>0$ and the
south one by $t<0$.

Let $\zf:{\mathbb R}\zfl{\mathbb R}$ be a function such that  $\zf\big((-\zinf,-\ze/2]\big)=1$, $\zf\big([\ze/2,\zinf))=-1$, and $\zf(t)=0$ if and only if $t=0$.

First assume $k=2$. On $A$ consider the vector field
$Z'=\zf(t)\zpar/\zpar t+(1-\zf^{2}(t))(-y_{2}\zpar/\zpar y_{1}+y_{1}\zpar/\zpar
y_{2})$ and extend it outside $A$ by $-\zx$ on the north part and by
$\zx$ on the south one. Fixed a point $p_{0}\zpe E$, consider a
function $\zq:S^{2}\zfl {\mathbb R}$ vanishing at $p_{0}$ and
positive on $S^{2}-\{p_{0}\}$. It is easily checked that $Z=\zq Z'$
is a limit vector field (here $Q$ is the equator plus both
poles, thus the dimension of $Q$ does not exceed $1$, and observe that the only sources are the poles).

Now assume $k\zmai 3$; let ${\widetilde Z}$ be a limit vector field on
$S^{k-1}$ constructed by induction. On $A$ consider the vector
field $Z=\zf(t)\zpar/\zpar t+(1-\zf^{2}(t)){\widetilde Z}$, where
${\widetilde Z}$ is regarded as a vector field on $A$ in the obvious
way, that is tangent to the second factor, and extend it outside
of $A$ by $-\zx$ on the north part and by $\zx$ on the south one. We now prove that
$Z$ is a limit vector field on $S^{k}$.

First note that the only sources of $Z$ are the poles. Moreover,
$Z$ does not have any rivet, which implies that $Z$ has no chain.
Indeed clearly no point of $S^k -E$ is a rivet; on the other hand
if $(0,q)$ is a rivet, as $Z$ is tangent to
$\{0\}\zpor S^{k-1}$ the only trajectory whose $\zw$-limit is this point
is included in $\{0\}\zpor S^{k-1}$. But clearly $(-\zd,0)\zpor\{q\}$ for
a $\zd>0$ sufficiently small is included in a trajectory with $\zw$-limit
$(0,q)$ what leads to {\em contradiction}.

By construction, no trajectory in $S^{k}-E$ is regular and periodic,
so $Z$ does not possess any periodic regular trajectory. On the
other hand, if ${\widetilde Z}$ is regarded as a vector field on $E$
and ${\widetilde Q}\zco E$ satisfies condition \ref{especial2} for ${\widetilde Z}$,
it suffices to take as $Q$ the union of all trajectories of $\zx$ passing
through ${\widetilde Q}$ plus both poles. For if $p\zpe (S^{n}-E)$
and $q\zpe E$ belong to the same $\zx$-trajectory, then the
$\zw$-limit of the $Z$-trajectory of $p$ equals the $\zw$-limit of
the ${\widetilde Z}$-trajectory of $q$.
\end{proof}

\section{The almost free case}\mylabel{sec-3}
Let $M$ be a connected manifold of dimension $m$. Given a
diffeomorphism $\zf\colon M\zfl M$, the {\em isotropy} of $\zf$ is the set
$I_\zf \colon =\{p\zpe M\colon \zf(p)=p\}$. A point $p\zpe I_\zf$
is said to be {\em positive} or {\em negative} according to the sign of
the determinant of $\zf_* (p)\colon T_p M\zfl T_p M$.
Obviously $I_\zf =I_{\zf}^+ \zun I_{\zf}^-$ where the
{\em positive isotropy} $I_{\zf}^+$ is the set of positive points
and the {\em negative isotropy} $I_{\zf}^-$ that of negative ones.

If $\zf\znoi Id$ has finite order, that is to say $\zf$ spans a finite
subgroup of diffeomorphisms, then $\zf$ is an isometry for some
Riemannian metric. Thus if $p\zpe I_\zf$, the use of normal coordinates with
origin $p$ allows us to identify the diffeomorphism $\zf$ with an element of
$O(m)$ different from the identity. Therefore locally $I_{\zf}^-$ is a
regular submanifold of codimension $\zmai 1$ and $I_{\zf}^+$ a
regular submanifold of codimension $\zmai 2$.

By definition the {\em maximal  isotropy} $I_{\zf}^{\max}$ is the set
of those points $p\zpe I_{\zf}^{-}$ such that the codimension of
$ I_{\zf}^{-}$ at $p$ equals $1$. It is easily seen that $I_{\zf}^{\max}$
is either empty or a closed regular submanifold of codimension $1$.
Notice that if $p\zpe I_{\zf}^{\max}$ then in normal coordinates with origin
$p$ the diffeomorphism $\zf$ is a symmetry with respect to a hyperplane
(the trace of $I_{\zf}^{\max}$).

Let $G$ be a finite group of diffeomorphisms of $M$. Let $e\in G$ be the identity element and
$\zlma$ be the order of $G$. By  the {\em isotropy}, the {\em positive isotropy}, the
{\em negative isotropy} and the {\em maximal isotropy} of $G$ we mean
$$I_G^{*}=\zung_{g\zpe G-\{e\}}I_\zf^{*}$$
where $*$ equals nothing, $+$, $-$ or $\max$ respectively.

For the purpose of this work one will say that the action of $G$ is
{\em almost free} if $I_G=I_G^{\max}$.

\begin{lemma}\mylabel{lem-1}
Assume $I_G^{+}=\zva$. Then the following hold:
\begin{enumerate}[label={\rm (\alph{*})}]
\item\mylabel{especial4} If $g$ and $h$ are two different elements of $G-\{e\}$ then
$I_{g}^{-}\zin I_{h}^{-} =\zva$.
\item\mylabel{especial5} If $I_{g}^{-}\znoi\zva$ then $g^2=e$.
\end{enumerate}
\end{lemma}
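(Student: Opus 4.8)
The plan is to reduce both parts to one elementary principle that is immediate from the hypothesis $I_G^+=\zva$: \emph{no non-identity element of $G$ has a positive fixed point}. Concretely, if $f\zpe G-\{e\}$ satisfies $f(p)=p$, then $p\zpe I_f=I_f^+\zun I_f^-$; were $p\zpe I_f^+$ we would get $p\zpe I_G^+=\zva$, which is absurd, so $p\zpe I_f^-$, i.e. $\det f_*(p)<0$. Turned around, the principle I shall use repeatedly reads: if some element of $G$ fixes a point at which its differential has positive determinant, then that element is $e$.

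For part \ref{especial4} I would argue by contradiction. Suppose there is $p\zpe I_g^-\zin I_h^-$ with $g\znoi h$ in $G-\{e\}$. Since $g(p)=h(p)=p$, also $h^{-1}(p)=p$ and hence $g\zci h^{-1}$ fixes $p$; by the chain rule and $h^{-1}(p)=p$,
$$\det\zpizq(g\zci h^{-1})_*(p)\zpder=\det\zpizq g_*(p)\zpder\zpu\det\zpizq (h^{-1})_*(p)\zpder=\frac{\det g_*(p)}{\det h_*(p)}>0,$$
because numerator and denominator are both negative. Thus $p$ is a positive fixed point of $g\zci h^{-1}$, so by the principle above $g\zci h^{-1}=e$, that is $g=h$, contradicting $g\znoi h$.

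For part \ref{especial5} the same device applies, now with $g^2$ in place of $g\zci h^{-1}$. If $p\zpe I_g^-$ then $g^2$ fixes $p$ and
$$\det\zpizq(g^2)_*(p)\zpder=\zpizq\det g_*(p)\zpder^2>0,$$
so $p$ is again a positive fixed point, this time of $g^2$. The principle then forces $g^2=e$, as claimed.

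I do not foresee any real obstacle: everything rests on the single observation that a product of two negative Jacobian determinants is positive, together with the standing hypothesis $I_G^+=\zva$. The only routine verifications are that the auxiliary elements $g\zci h^{-1}$ and $g^2$ genuinely fix $p$ and the chain-rule computation of their Jacobian determinants there.
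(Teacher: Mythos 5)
Your proposal is correct and follows essentially the same route as the paper: both arguments observe that $gh^{-1}$ (resp. $g^2$) fixes $p$ with positive Jacobian determinant there, so the hypothesis $I_G^{+}=\zva$ forces that element to be the identity. You have merely spelled out the chain-rule computation that the paper leaves implicit.
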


\begin{proof}
\ref{especial4} If $p\zpe I_{g}^{-}\zin I_{h}^{-}$  then $p$ belongs to
 the positive isotropy of $gh^{-1}$, so $gh^{-1}=e$.

\ref{especial5} If $p\zpe  I_{g}^{-}$ then $p$ belongs to $I_{g^2}^+$,
hence $g^2 =e$.
\end{proof}

{\em In the remainder of this section the action of $G$ is assumed to be almost
free}. Our goal will be to prove the main theorem under this
supplementary hypothesis.

The proof consists of four steps. In the first one, we
construct a vector field $Z$ as the gradient of a suitable $G$-invariant Morse
function $\zm$. In a second step, one modifies $Z$ for obtaining a new
$G$-invariant vector field $Y$ with as many pseudo-circle as (local) minima
of $\zm$.

The third part is the construction from $Y$ of a $G$-invariant vector field $X$
that possesses a countable family of chains. These chains are topological
invariant of $X$ and allow us to control its continuous automorphisms.
Finally, the fourth step is devoted to determine these automorphisms.

\subsection{The gradient vector field}\mylabel{sec-3.1}
Let $\zm\colon M\zfl{\mathbb R}$ be a Morse function that is
$G$-invariant, proper and non-negative, whose existence is assured
by a result of Wasserman \cite{WA}. Let $C$ denote the set of
critical points of $\zm$, which is closed, discrete and countable. As $M$ is paracompact,
there exists a locally finite family of disjoint
open sets $\{A_{p}\colon p\zpe A_{p}\}_{p\zpe C}$ which is $G$-invariant, i.e.\
$A_{g\zpu p}=g\zpu A_p$ for any $p\zpe C$ and any $g\zpe G$.
By shrinking  each $A_p$ if necessary, one constructs a collection of charts
$\{(A_{p},\zr_{p})\}_{p\zpe C}$ such that:

\begin{enumerate}[label={\rm (C.\arabic{*})}]
 \item\mylabel{especial6a} $\zr_p (A_p )=B(2r_p )$ for some $r_p >0$ and
$\zr_p (p)=0$.
\item\mylabel{especial6b} $\zm=\zsu_{j=1}^{k}x_{j}^{2}
-\zsu_{j=k+1}^{m-1}x_{j}^{2}+\ze x_{m}^2 +\zm(p)$ on $A_p$
where $\ze=\zmm 1$. (Of course $k$ and $\ze$ depend on $p$, and
$x=(x_1 ,\dots,x_m)$ are the coordinates associated to the chart
$(A_{p},\zr_{p})$. Nevertheless, in order to avoid an over-elaborated
notation, these facts are not indicated unless it is completely necessary.)
\item\mylabel{especial6c} $\zr_{g\zpu p}\zci g\zci\zr_{p}^{-1}$ equals
\begin{itemize}
\item the identity map, if the $G$-orbit of $p$ has exactly $\zlma$ elements, or
\item the identity
or the symmetry $\zG(x)=(x_1 ,\dots,x_{m-1},-x_m )$, if the $G$-orbit of $p$ has less
than $\zlma$ elements.
\end{itemize}

\item\mylabel{especial6d} The function $\zm$ can be chosen in such a way
that the $G$-orbit of every (local) minimum has $\zlma$ elements.
\end{enumerate}

Indeed, let $\mathcal O\subset C$ be a $G$-orbit and fix a point
$p\in {\mathcal O}$. If $\vert\mathcal O\vert=\zlma$ one constructs such a chart
around $p$ and then use the $G$-action to get a chart around any point of $\mathcal O$.

If $\vert\mathcal O\vert<\zlma$ then $p\zpe I_G$ and, by Lemma \ref{lem-1},
$\vert\mathcal O\vert=\zlma /2$ and there exists just one element $h\zpe G-\{e\}$ such that
$h\zpu p=p$. Moreover $h^2 =e$. As $I_h =I_{h}^{\max}$ there are coordinates
$(y_1 ,\dots,y_m)$ around $p\zeq 0$ such that $h$ is given by the symmetry
$\zG(y)=(y_1 ,\dots,y_{m-1},-y_m )$.

By Lemma \ref{leA-1} applied to coordinates $(y_1 ,\dots,y_m)$ (observe that now
$x$ and $y$ have exchanged their roles) there exist
coordinates $(x_1 ,\dots,x_m)$ around $p\zeq 0$ in which $h$ is still given by
the symmetry $\zG(x)=(x_1 ,\dots,x_{m-1},-x_m )$ and
$\zm=\zsu_{j=1}^{k}x_{j}^{2}
-\zsu_{j=k+1}^{m-1}x_{j}^{2}+\ze x_{m}^2 +\zm(p)$, $\ze=\zmm 1$.
Now for another point $q\zpe\mathcal O$ choose a $g\zpe G-\{e\}$ such that
$q=g\zpu p$ and set $\zr_q =\zr_p \zci g^{-1}$.

Finally if our $p$ is a minimum of $\zm$, always with a $G$-orbit of
$\zlma /2$ elements, by applying Proposition \ref{prA-1} to a
$0<r'_p <min\{1,r_p \}$ we may modify $\zm$ inside $\zr^{-1}(B(r_p ))$
to construct a new $h$-invariant Morse function, still called $\zm$, such that
each one of its minima in $A_p$ is not $h$-invariant and, as before, transfer
this modification to every $A_q$, $q\zpe\mathcal O-\{p\}$, by means of a
$g\zpe G-\{e\}$ such that $g\zpu p=q$.

As every minimum in $A_p$ of the new $\zm$ is not $h$-invariant, then the $G$-orbit
of any minimum in $\zung_{q\zpe\mathcal O}A_q$ of the new $\zm$ has
$\zlma$ elements. Obviously the same thing can be done with any other
$G$-orbit with $\zlma/2$ elements consisting of minima.

On the other hand since $\zbv\zt(x)\zbv\zmei\zdbv x\zdbv^2$ in Proposition
\ref{prA-1}, the new function $\zm$ is proper and low bounded by $-1$
(more exactly the difference between the new function $\zm$ and the old
one takes its values in $[-2,0]$). Therefore replacing $\zm$ by $\zm+1$
shows \ref{especial6d}.

On $M$ there always exists a Riemannian metric $g'$ that on each
$\zr_{p}^{-1}(B(r_p ))$ is written as $2\zsu_{j=1}^m dx_j \zte dx_j$. Therefore
shrinking every $A_p$ allows to assume $g'=2\zsu_{j=1}^m dx_j \zte dx_j$ on the
whole $A_p$. Moreover taking into account Property \ref{especial6c} of the
collection $\{(A_{p},\zr_{p})\}_{p\zpe C}$ we may assume, without losing
the property  above, that $g'$ is $G$-invariant by considering
$(1/\zlma)\zsu_{h\zpe G}h^* (g')$ instead of $g'$ if necessary.

Let $Z'$ be the gradient vector field of $\zm$ with respect to $g'$ and
$\zf\co M\zfl\RR$ be a $G$-invariant proper function that is constant around every $p\zpe C$.
As before, $\zf$ can be supposed constant on each $A_p$ by shrinking these open sets
if necessary. It is well known that the vector field
$Z=e^{-(Z'\zf)^2}Z'$
is complete. Moreover $Z$ is the gradient of $\zm$ with respect to the $G$-invariant
Riemannian metric $\tilde g =e^{(Z'\zf)^2}g'$.

On the other hand $\tilde g =g'$ on every $A_p$, $p\zpe C$, since $\zf$ is constant
on these sets. Hence
$$Z=\zsu_{j=1}^{k}x_{j}{\frac {\zpar} {\zpar x_{j}}}
-\zsu_{j=k+1}^{m-1}x_{j}{\frac {\zpar} {\zpar x_{j}}}
+\ze x_{m}{\frac {\zpar} {\zpar x_{m}}}$$
$\ze =\zmm 1$ on each $A_{p}$.

\subsection{Construction of pseudo-circles}\mylabel{sec-3.2}
Since $\zm$ is non-negative and proper, the $\za$-limit of any
regular trajectory of $Z$ is a (local) minimum or a saddle of $\zm$, whereas
its $\zw$-limit is empty, a (local) maximum or a saddle of $\zm$.
Moreover $Z$ does not possesses any pseudo-circle because no trajectory
of a gradient vector field has its $\za$-limit equal to its $\zw$-limit.
Clearly $Z$ does not have rivets nor topological rivets.

Now by modifying $Z$ we will construct a new vector field with as many
pseudo-circle as minima of $\zm$.

Let $I$ be the set of minima of $\zm$ and $\tilde I$ be that of maxima.
For sake of simplicity let us identify $A_i$ with $B(2r_i )$. Denote by
$E_{i}$, $i\zpe I$, the sphere in $A_{i}$ of radius $r_i$ and center the origin.
For each $i\zpe I$ there exist $\ze_i>0$, $0<r''_i <r_i <r'_i <2r_i$
and a diffeomorphism identifying
${\widetilde A}_{i}=B(r'_i )-{\overline B}(r''_i )$ with
$(-\ze_i ,\ze_i )\zpor S^{m-1}$, endowed with coordinates $(t,y)$,
in such a way that $E_{i}$ corresponds to $\{0\}\zpor S^{m-1}$ and
$Z$ to $\zpar/\zpar t$.

Let $\zf_{i}:{\mathbb R}\zfl{\mathbb R}$ be a function such that $\zf_{i}\big((-\zinf,-\ze_i /2]\big)=1$, $\zf_{i}\big([\ze_i /2,\zinf)\big)=-1$ and $\zf_{i}(t)=0$ if and
only if $t=0$.
On every ${\widetilde A}_{i}\zeq (-\ze_i ,\ze_i )\zpor S^{m-1}$ define
$$Y=\zf_{i}(t)\zpar/\zpar t+(1-\zf_{i}^{2}(t))Z'_i$$ where:

\begin{enumerate}[label={\rm (\arabic{*})}]
\item\mylabel{nuevo1} If $m\zmai 3$ then $Z'_i$ is a limit vector field on
$S^{m-1}$ regarded on $(-\ze_i ,\ze_i )\zpor S^{m-1}$ in the obvious way.
\item\mylabel{nuevo2}  If $m=2$ then
$Z'_{i}=\zl_{i}(t,y)(-y_{2}\zpar/\zpar y_{1}+ y_{1}\zpar/\zpar
y_{2})$ where the function $\zl_{i}\co (-\ze_i ,\ze_i )\zpor S^{1}\zfl{\mathbb R}$
vanishes at some point $(0,q_{i})$ of $\{0\}\zpor S^{1}$ and is positive elsewhere.
\end{enumerate}

Now prolong $Y$ to each $A_i$ by $Z$ inside of $A_{i}-{\widetilde A}_{i}$,
that is on ${\overline B}(r''_i )$, and by $-Z$ outside of $A_{i}-{\widetilde
A}_{i}$, that is on $A_{i}-B(r'_i )$.
In turn, extend $Y$ already defined on $\zung_{i\zpe I}A_{i}$
to the whole manifold $M$ by $-Z$ on $M-\zung_{i\zpe I}A_{i}$

To be sure that $Y$ is $G$-invariant, in each orbit $\mathcal O$
included in $I$ choose
a point $i$ and construct $Y$ on $A_i$ as before. Then by means of the $G$-action construct $Y$ on every $A_j$, $j\zpe\mathcal O$.
As the action of $G$ on $\zung_{j\zpe\mathcal O}A_j$ is free by property
\ref{especial6d} of the family $\{(A_{p},\zr_{p})\}_{p\zpe C}$, this construction
is coherent. Therefore {\em from now on $Y$ will be assumed $G$-invariant}.

Notice that the singularities of $Y$ in $M-\zung_{i\zpe I}E_{i}$
are saddles or sources. The singularities of $Y$ in $\zung_{i\zpe
I}E_{i}$ are never sources nor rivets since each of them is the
$\zw$-limit of two or more regular trajectories traced in
$M-\zung_{i\zpe I}E_{i}$. Thus $Y$ has no topological  rivet and,
consequently, no chain. Besides every $E_i$ contains a single
pseudo-circle of $Y$ {\em denoted by $P_i$ henceforth}; this vector field
does not possess any other pseudo-circle.

It is easily checked that $Y$ is complete with no regular periodic trajectories.
On the other hand the set $Y^{-1}(0)$ of singularities of $Y$ consists of $C$
plus the singularities in each $E_i$ (a finite number for every $E_i$). Since
the family $\{E_{i}\}_{i\zpe C}$ is locally finite because $\{A_{p}\}_{p\zpe C}$
is, it follows that $Y^{-1}(0)$ is discrete and countable. Moreover the set
of sources of $Y$ equals $I\zun \tilde I$.

\begin{lemma}\mylabel{lem-2}
There exists a subset $Q\zco M$, which does not exceed
dimension $m-1$, such that for every point $q\zpe(M-Q)$, the $Y$-trajectory of $q$ is
regular and included in $M-Q$, its $\za$-limit is a source or empty, and
its $\zw$-limit a pseudo-circle.
\end{lemma}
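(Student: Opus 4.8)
The plan is to take advantage of the fact that $Y$ agrees with $-Z=-\nabla\zm$ outside the modification collars, so away from the spheres $E_i$ the trajectories are those of a negative gradient flow, whereas inside each collar the flow is pushed onto $E_i$ and there governed by the field $Z'_i$. I would therefore define $Q$ as a countable union of $Y$-\emph{invariant} pieces, each of dimension not exceeding $m-1$. Invariance makes the first two clauses automatic: if $q\zpe M-Q$ then its whole trajectory lies in $M-Q\zcoi M-Y^{-1}(0)$, hence is regular and stays in $M-Q$. Only the two limit-set assertions then remain to be verified on the complement.

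Concretely I would set
$$Q=Y^{-1}(0)\ \zun\ \zung_{i\zpe I}E_i\ \zun\ \zung_{s}\big(W^{s}(s)\zun W^{u}(s)\big)\ \zun\ \zung_{i\zpe I}B_i,$$
where $s$ ranges over the (countably many) saddles of $\zm$ and $W^{s}(s),W^{u}(s)$ denote their stable and unstable manifolds for $Y$. The set $Y^{-1}(0)$ is discrete; each $E_i$ is a closed submanifold of dimension $m-1$, and it is $Y$-invariant because $Y=Z'_i$ is tangent to $E_i$ there; and since a saddle has Morse index $\zl$ with $1\zmei\zl\zmei m-1$, the manifolds $W^{u}(s)$ and $W^{s}(s)$ have dimensions $\zl$ and $m-\zl$, both $\zmei m-1$. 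Thus the first three families do not exceed dimension $m-1$.

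For the sets $B_i$ I would use the defining property \ref{especial2} of the limit field $Z'_i$ on $E_i\zeq S^{m-1}$ (for $m\zmai 3$; the rotational case \ref{nuevo2} with $m=2$ is simpler, as then $E_i=P_i$): the bad set $\widetilde Q_i\zco E_i$, consisting of those points whose $\zw$-limit is not $P_i$, is $Z'_i$-invariant and does not exceed dimension $m-2$. The key observation is that a $Y$-trajectory inside the collar $\widetilde A_i\zeq(-\ze_i,\ze_i)\zpor S^{m-1}$ with $t\znoi 0$ obeys $\dot t=\zf_i(t)$ and $\dot y=(1-\zf_i^{2}(t))Z'_i(y)$, so $t\zfl 0$ in infinite time while the elapsed $Z'_i$-time $\zil(1-\zf_i^{2})\,ds$ diverges; hence its $y$-coordinate sweeps out the entire forward $Z'_i$-orbit of its entry point, and the $\zw$-limit of the trajectory coincides with that of the corresponding $Z'_i$-orbit. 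Consequently the trajectory fails to have $\zw$-limit $P_i$ exactly when its entry point lies over $\widetilde Q_i$. Choosing a sphere $\{t=c\}$, $0<|c|<\ze_i$, transverse to $Y$ as a cross-section, the bad crossing points form $\{c\}\zpor\widetilde Q_i$, of dimension $\zmei m-2$; I then define $B_i$ as the $Y$-saturation of these points, which adds at most one dimension, so $B_i$ does not exceed dimension $m-1$ and is $Y$-invariant since the $\zw$-limit is an orbit invariant. As $I$ is countable, $Q$ does not exceed dimension $m-1$.

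It remains to check the two limits for $q\zpe M-Q$. Going backward, $q$ avoids every $W^{u}(s)$ and lies on no $E_i$, so the reversed orbit leaves each collar either through its inner face, where it converges to a minimum (a source), or through its outer face into the region $Y=-\nabla\zm$, where $\zm$ increases; since $\zm$ is proper the $\za$-limit is therefore a source---a minimum or a maximum---or else empty. Going forward, $\zm$ is non-increasing and bounded below, so by properness the orbit remains in a compact set and has nonempty $\zw$-limit; as $q$ avoids every $W^{s}(s)$ this $\zw$-limit is not a saddle, and as the orbit leaves every source it must enter some collar and approach $E_i$; finally, because $q$ avoids every $B_i$ the approach occurs at a good point, so the $\zw$-limit is the pseudo-circle $P_i$. \textbf{The main obstacle} is precisely this reduction for the $B_i$: establishing that an asymptotically approaching $Y$-trajectory inherits the $\zw$-limit of the on-$E_i$ field $Z'_i$, and then transferring the bound $\dim\widetilde Q_i\zmei m-2$ through the collar to $\dim B_i\zmei m-1$ by a transverse section and flow-saturation; the surrounding gradient-flow bookkeeping is routine.
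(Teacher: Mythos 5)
Your proof is correct and follows essentially the same route as the paper: discard the zeros of $Y$, the spheres $E_i$ and the invariant manifolds of the saddles, and then enclose the remaining bad trajectories by flow-saturating the set $\widetilde Q_i$ supplied by the limit-field property of $Z'_i$ on $E_i$. The only real difference is the countability device used to keep the saturation at dimension $m-1$ (the paper saturates the $(m-1)$-dimensional cylinder $(-\varepsilon_i,\varepsilon_i)\times\widetilde Q_i$ by \emph{rational} times, whereas you saturate an $(m-2)$-dimensional transversal slice by all times, which works since that slice is transverse to $Y$); just note that trajectories approach $E_i$ from either side of the collar, so you need both sections $\{t=c\}$ and $\{t=-c\}$ rather than a single one.
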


\begin{proof}
As the set of zeros of $Y$ is countable and
$\zung_{i\zpe I}E_{i}$ can be enclosed in dimension $m-1$, it
suffices to consider the points $q$ of $M-\zung_{i\zpe I}E_{i}$
such that $Y(q)\znoi 0$. On the other hand since the outset and the
inset of any saddle is enclosed in dimension $m-1$, the set of points whose
$\za$-limit or whose $\zw$-limit is a saddle is enclosed in dimension $m-1$.

Thus it suffices to study those points $q$ in $M-\zung_{i\zpe I}E_{i}$
whose trajectory is regular and intersects some $A_i$.

By construction $Y$ is tangent to
$E_{i}\zeq \{0\}\zpor S^{m-1}$ and a limit vector field on this submanifold.
Therefore there exists $\{0\}\zpor{\tilde Q}_{i}\zco E_{i}$, which can be enclosed
in dimension $m-2$, such that for any $q\zpe(E_{i}-\{0\}\zpor{\tilde Q}_{i})$ its
$Y$-trajectory has the pseudo-circle $P_i$ as $\zw$-limit. Consequently,
the pseudo-circle $P_{i}$ is the $\zw$-limit of the $Y$-trajectory of each
point of $(-\ze_i ,\ze_i )\zpor(S^{m-1}-{\tilde Q}_{i})$.

Let $\zF_t$ be the flow of $Y$. The set $(-\ze_i ,\ze_i )\zpor{\tilde Q}_{i}$ does not
exceed dimension $m-1$ and, since ${\mathbb Q}$ is countable,
neither does $\zung_{t\zpe\mathbb Q}\zF_{t}((-\ze_i ,\ze_i )\zpor{\tilde Q}_{i})$.
In other words, taking into account that $I$ is countable follows that
the set of points $q\zpe(M-\zung_{j\zpe I}E_{j})$, with
$Y(q)\znoi 0$, whose $Y$-trajectory intersects some $A_i$ but whose
$\zw$-limit is not a pseudo-circle may be enclosed in dimension $m-1$.

Finally if $M-Q$ is not $Y$-saturated, since all points of the trajectory of $q\in M-Q$ have the
same properties, we may replace $Q$ by $Q'=M-M'$ where $M'$ is the $Y$-saturation of $M-Q$.
\end{proof}

\subsection{Construction of chains}\mylabel{sec-3.3}
First notice that $Y$ is tangent to $I_G=I_{G}^{\max}$ because it is
$G$-invariant. Consider a set $Q$ as in Lemma \ref{lem-2}, which is
$Y$-saturated  since $M-Q$ is so. On the other hand as $G$ is finite the set
$G\zpu Q$ still has the properties of Lemma \ref{lem-2}. In short, we may suppose
that $Q$ is $G$ and $Y$-saturated. Since
${\zmontar \zci\over{Q}}={\zmontar \zci\over{I_G}}=\zva$
and $I_G$ is closed, there exists a countable
set $N\zco M-(Q\zun I_G )$ that is dense in $M$ and $G$-saturated. Observe that
any orbit of the action of $G$ on $N$ has $\zlma$ elements because this action is
free. Let $\F$ be the family of all trajectories of $Y$ that intersect $N$.
Observe that $\F$ is infinite since no trajectory of $Y$ is locally dense.

\begin{lemma}\mylabel{lem-3}
Let $U$ be a $G$-invariant vector field on $M$, $\zf_t$ be its flow and
$q$ be a point whose $U$-trajectory is non-periodic. Given
$(g,t),(h,s)\zpe G\zpor\RR$, if $(g\zci\zf_t )(q)=(h\zci\zf_s )(q)$ then
$t=s$ and either $g=h$ or $q\zpe I_G$.
\end{lemma}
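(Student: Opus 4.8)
The plan is to rewrite the hypothesis as a single fixed-point equation and then use the finiteness of $G$ to decouple the real parameter from the group element. Since $U$ is $G$-invariant and complete, each $g\zpe G$ is an automorphism of $U$ and hence commutes with its flow, so $g\zci\zf_t =\zf_t \zci g$ for all $g\zpe G$ and $t\zpe\RR$. Put $k=h^{-1}g\zpe G$ and $c=t-s$. Applying $h^{-1}$ to $(g\zci\zf_t )(q)=(h\zci\zf_s )(q)$, then $\zf_{-s}$, and using this commutation relation, the equation collapses to $\zf_c (k(q))=q$; equivalently, the homeomorphism $\zf_c \zci k$ fixes $q$.

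First I would establish that $t=s$. This is the step I expect to be the main obstacle, since $k$ need not be trivial and so one cannot simply invoke injectivity of the orbit map to read off $c=0$. The idea is to use that $G$ is finite: let $n\zmai 1$ be the order of $k$, so that $k^n =e$. Because $k$ commutes with $\zf_c$, raising $\zf_c \zci k$ to the $n$-th power collapses the group part, giving $(\zf_c \zci k)^n =\zf_{nc}\zci k^n =\zf_{nc}$. As $\zf_c \zci k$ fixes $q$, so does its $n$-th power, whence $\zf_{nc}(q)=q$. Since the $U$-trajectory of $q$ is non-periodic, the relation $\zf_\zt (q)=q$ forces $\zt =0$ (this is precisely where non-periodicity is used, and it also rules out $q$ being a singularity of $U$), so $nc=0$ and therefore $c=0$, i.e.\ $t=s$.

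It then remains to extract the dichotomy. With $c=0$ the fixed-point equation $\zf_c (k(q))=q$ becomes $k(q)=q$. If $k=e$ we obtain $g=h$; otherwise $k\zpe G-\{e\}$ fixes $q$, hence $q\zpe I_k \zcoi I_G$. In either case the conclusion follows. The whole argument is formal once $t=s$ is secured; the essential input, which I would flag explicitly, is the finiteness of $G$, as it is what makes $k$ have finite order and lets the power $(\zf_c \zci k)^n$ strip away the group element.
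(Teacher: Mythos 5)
Your proof is correct and follows essentially the same route as the paper: reduce to the fixed-point equation for $h^{-1}g\zci\zf_{t-s}$, raise to a power (you use the order of $k=h^{-1}g$, the paper uses the order $\zlma$ of $G$) to kill the group element, and invoke non-periodicity to conclude $t=s$, after which the dichotomy is immediate. No gaps.
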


\begin{proof}
From hypotheses, it immediately follows that $((h^{-1}g)\zci\zf_{(t-s)})(q)=q$.
Therefore $$q=((h^{-1}g)\zci\zf_{(t-s)})^\zlma (q)
=((h^{-1}g)^\zlma \zci\zf_{\zlma(t-s)})(q)
=\zf_{\zlma(t-s)}(q)$$
since $\zlma$ is the order of $G$. Hence $\zf_{\zlma(t-s)}(q)=q$, which implies
$\zlma(t-s)=0$ and $t=s$.

Thus $(h^{-1}g)\zpu q=q$. If $h^{-1}g\znoi e$ then $q\zpe I_{(h^{-1}g)}\zco I_G$.
\end{proof}

\begin{corollary}\mylabel{cor-1}
The natural action of $G$ on $\F$ is free.
\end{corollary}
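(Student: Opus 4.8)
The plan is to show directly that no nontrivial element of $G$ can stabilize a trajectory in $\F$. So I would fix a trajectory $T\zpe\F$ and an element $g\zpe G$ with $g\zpu T=T$, and aim to conclude $g=e$. The key is to exploit the defining property of $\F$ to extract a convenient base point: since $T$ meets $N$, I choose $q\zpe T\zin N$. Because $N\zco M-(Q\zun I_G)$, this point satisfies $q\znope Q$ and $q\znope I_G$, the two facts that will drive the whole argument.

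Next I would verify that the hypothesis of Lemma \ref{lem-3} is met, namely that the $Y$-trajectory of $q$ is non-periodic. Since $q\znope Q$ and $Q$ is $Y$-saturated, Lemma \ref{lem-2} tells us that the trajectory of $q$ is regular; and $Y$ has no regular periodic trajectory, so this trajectory is non-periodic. This is the only place where the earlier structural work on $Y$ actually enters.

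Then comes the core step. As $g$ stabilizes $T$ and $q\zpe T$, the point $g(q)$ again lies on $T$, which is precisely the $Y$-trajectory of $q$; hence $g(q)=\zF_s (q)$ for some $s\zpe\RR$, where $\zF$ denotes the flow of $Y$. Rewriting this equality as $(g\zci\zF_0)(q)=(e\zci\zF_s)(q)$ and applying Lemma \ref{lem-3} (with $U=Y$, the non-periodic point $q$, and the pairs $(g,0)$ and $(e,s)$) yields $0=s$ together with the alternative $g=e$ or $q\zpe I_G$. Since $q\znope I_G$, the second alternative is excluded, so $g=e$, which is exactly freeness of the action.

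I do not expect a genuine obstacle here: once the base point $q$ is pinned down inside $N$, freeness is essentially a direct translation of Lemma \ref{lem-3}. The only points requiring a little care are verifying non-periodicity of the trajectory (so that Lemma \ref{lem-3} may be invoked) and correctly casting the stabilizer condition $g\zpu T=T$ into the two-parameter equality that Lemma \ref{lem-3} consumes; both are routine given Lemmas \ref{lem-2} and \ref{lem-3}, and the crucial structural input $q\znope I_G$ is guaranteed by the choice of $N$.
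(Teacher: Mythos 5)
Your argument is correct and is essentially the paper's own proof: both reduce the stabilizer condition $g\zpu T=T$ to an equality of the form covered by Lemma \ref{lem-3} at a point $q$ of $T$ avoiding $I_G$, and conclude $g=e$. Your additional care in choosing $q\zpe T\zin N$ and in checking non-periodicity via Lemma \ref{lem-2} only makes explicit what the paper leaves implicit.
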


\begin{proof}
Assume $g\zpu T=T$ for some $g\zpe G$ and $T\zpe\F$. Then given $q\zpe T$
there exists $t\zpe\RR$ such that $\zF_t (q)=g\zpu q$ and, applying Lemma \ref{lem-3}
to $Y$ and $q$, it follows that $t=0$ and $g\zpu q=q$. As $q\znope I_G$, then $g=e$ must hold.
\end{proof}

The set $\F$ is a disjoint union of $G$-orbits, say $\F_n$, $n\zmai 3$ (by technical reasons we start at natural three). Let $\NN'=\NN-\{0,1,2\}$. Since by Corollary \ref{cor-1}
each $\F_n$ consists of $\zlma$ different trajectories one set
$\F=\{T_{nk}\co n\zpe\NN',\,k=1,\dots,\zlma\}$, where
$\F_n =\{T_{nk}\co k=1,\dots,\zlma\}$, in such a way that $T_{nk}\znoi T_{n'k'}$
if $(n,k)\znoi(n',k')$. (That is to say first one numbers the $G$-orbits in $\F$ and
then, with a second subindex, the elements of every orbit.)

Consider a sequence of $G$-invariant compact sets $\{ K_{n}\}_{n\zpe \mathbb N}$
such that $K_{n}\zco {\zmontar \zci\over{K}}_{n+1}$ and
$\zung_{n\zpe\mathbb N}K_{n}=M$. For every trajectory $T_{nk}$ let $W_{nk}$ be a
set of $n-1$ different points of $T_{nk}$ in such a way that:

\begin{enumerate}[label={\rm (\alph{*})}]
\item\mylabel{especial7a} If $g\zpu T_{nk}=T_{nk'}$ then $g\zpu W_{nk}=W_{nk'}$.
\item\mylabel{especial7b} $W_{nk}\zco M-K_{n}$ if the $\za$-limit of $T_{nk}$
is empty.
\item\mylabel{especial7c} $W_{nk}\zco\zr_{i}^{-1}(B(r_i /n))$ if the
$\za$-limit of $T_{nk}$ is $i$.
\end{enumerate}

Let $W=\zung_{n\zpe\NN',\,k=1,\dots,\zlma}W_{nk}$; then $C\zun W$ is a
countable set whose accumulation points are the minima  and the maxima
of $\zm$, i.e.\ the elements of $I\zun\tilde I$. Therefore
$C\zun W$ is closed and there exists
a function $\zt:M\zfl [0,1]\zco \mathbb R$ such that $\zt^{-1}(0)=C\zun W$.
Set $X=\zt Y$. It easily seen that:

\begin{enumerate}[label={\rm (\arabic{*})}]
\item\mylabel{especial8.1} $X^{-1}(0)=Y^{-1}(0)\zun W$ is countable
and closed. The set of its accumulation points equals $I\zun\tilde I$.
\item\mylabel{especial8.2}  $X$ is complete and has no periodic regular
trajectory.
\item\mylabel{especial8.3} $\{P_{i}\}_{i\zpe I}$ is the family of all
pseudo-circles of $X$.
\item\mylabel{especial8.4}  Let $C_{nk}$ be the family of $X$-trajectories of
$T_{nk}-W_{nk}$ endowed with the order induced by that of $T_{nk}$ as
$Y$-trajectory. Then $C_{nk}$ is a chain of $X$ of order $n$ whose
rivets are the points of $W_{nk}$. Besides $C_{n1},\dots,C_{n\zlma}$ are
the only chains of $X$ of order $n$ and hence $\{C_{nk}\}$, $k=1,\dots,\zlma$,
$n\zpe\NN'$, is the set of all the chains of $X$.

Denote by $H_{nk}$ the last link of $C_{nk}$ and by $P_{\zl(n,k)}$ the
$\zw$-limit of $H_{nk}$ (therefore $\zl$ is a map from
$\NN'\zpor\{1,\dots,\zlma\}$ to $I$).
\item\mylabel{especial8.5} $\zung_{n\zpe\NN',\, k=1,\dots,\zlma}H_{nk}$ is
dense in $M$.
\end{enumerate}

\begin{remark}\mylabel{rem-1}
Notice that the chains $C_{nk}$ given by \ref{especial8.4} can be described
in topological terms as finite sequences of $X$-trajectories such that:

\begin{enumerate}[label={\rm (\roman{*})}]
\item\mylabel{especial9i} Between two consecutive links, the $\zw$-limit of the
first one equals the $\za$-limit of the second one. Moreover this
set consists in a topological rivet.
\item\mylabel{especial9ii}  The $\za$-limit of the first link is an accumulation point of
$X^{-1}(0)$ or empty.
\item\mylabel{especial9iii} The $\zw$-limit of the last link is a pseudo-circle.
\end{enumerate}

Observe that $\{C_{nk}\}$, $k=1,\dots,\zlma$, $n\zpe\NN'$, is the set of all the objects
satisfying \ref{especial9i}, \ref{especial9ii} and \ref{especial9iii} above because $W$ is
the set of topological rivets of $X$ ($Y$ has no rivet). Therefore
{\em any continuous automorphisms of $X$ maps chains to chains}.
\end{remark}

By definition the {\em roll} $R_i$, $i\zpe I$, is the union of all $H_{nk}$
whose $\zw$-limit equals $P_i$.

\subsection{$X$ is a suitable vector field}\mylabel{sec-3.4}
In this subsection $\zF_t$ will be the flow of $X$.
Consider a homeomorphism $f\co M\zfl M$ such that $f\zci\zF_t =\zF_t \zci f$ for any
$t\zpe\RR$.

\begin{proposition}\mylabel{pro-2}
For each roll $R_i$ there exist $t_i \zpe\RR$ and $g_i \zpe G$ such that
$f=g_i \zci\zF_{t_i}$ on $R_i$.
\end{proposition}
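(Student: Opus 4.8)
The plan is to treat one roll $R_i$ at a time and to split the problem into two independent synchronizations: first, to show that a single group element $g_i\zpe G$ realizes $f$ on \emph{every} last link making up $R_i$, and then that a single time $t_i$ does. The synchronization of the time parameter across the infinitely many links of a roll is, I expect, the delicate point.

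First I would record the structural rigidity. By Remark \ref{rem-1} the homeomorphism $f$ carries chains to chains, and since the order of a chain (its number of links, equivalently its number of topological rivets plus one) is a topological invariant, $f$ maps the order-$n$ chains $C_{n1},\dots,C_{n\zlma}$ onto themselves; hence it permutes their last links $H_{n1},\dots,H_{n\zlma}$ and sends each pseudo-circle to a pseudo-circle. On the other hand $G$ acts on $\{H_{n1},\dots,H_{n\zlma}\}$ transitively (this set of last links is governed by the orbit $\F_n$, and property \ref{especial7a} forces $g\zpu H_{nk}=H_{nk'}$ whenever $g\zpu T_{nk}=T_{nk'}$) and freely (Corollary \ref{cor-1}); moreover $G$ acts freely on the family $\{P_i\}_{i\zpe I}$ of pseudo-circles, because $g\zpu P_i=P_{g\zpu i}$ and $G$ acts freely on the set $I$ of minima by property \ref{especial6d}.

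For the group synchronization, fix a last link $H_{nk}\zco R_i$. Since $G$ acts simply transitively on the order-$n$ last links, there is a unique $g\zpe G$ with $g\zpu H_{nk}=f(H_{nk})$. Taking $\zw$-limits of both sides and using that both $g$ and $f$ commute with the flow and hence carry $\zw$-limits to $\zw$-limits, I obtain $g\zpu P_i=f(P_i)$. As $G$ acts freely on pseudo-circles, this $g$ is forced to be the \emph{same} element for every link $H_{nk}$ in $R_i$; call it $g_i$. Thus $f(P_i)=g_i\zpu P_i$ and $f(H_{nk})=g_i\zpu H_{nk}$ for all $H_{nk}\zco R_i$. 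Now set $F=g_i^{-1}\zci f$: it commutes with $\zF_t$, fixes $P_i$ setwise (hence fixes its unique singular point and its regular trajectory $\zg_i$), and fixes each last link $H_{nk}$ of $R_i$ setwise, since $F(H_{nk})=g_i^{-1}(g_i\zpu H_{nk})=H_{nk}$.

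It remains to synchronize the time, which I expect to be the main obstacle. Because $X$ has no periodic regular trajectory (\ref{especial8.2}), a flow-commuting homeomorphism preserving such a trajectory acts on it as a single time-translation, so $F=\zF_{s(n,k)}$ on $H_{nk}$ and $F=\zF_{s_0}$ on $\zg_i$. The crucial geometric input is that $P_i$, and in particular $\zg_i$, lies in the closure of every $H_{nk}\zco R_i$, being their common $\zw$-limit. Since $F$ and $\zF_{s(n,k)}$ are continuous maps agreeing on $H_{nk}$, they agree on $\overline{H_{nk}}\zccoi\zg_i$; comparing with $F=\zF_{s_0}$ on $\zg_i$ and using that the flow is injective in time along the non-periodic trajectory $\zg_i$ yields $s(n,k)=s_0$ for every link. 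Hence $F=\zF_{s_0}$ on all of $R_i$, so $f=g_i\zci\zF_{t_i}$ with $t_i=s_0$. The elegance — and the difficulty — is concentrated in this last maneuver: the group element is pinned down almost formally by freeness of the $G$-action on pseudo-circles, whereas matching the translation parameter across the accumulating family of last links of a single roll is exactly what the shared limit set $P_i$ is for.
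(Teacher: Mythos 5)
Your proof is correct and takes essentially the same route as the paper's: the group element is pinned down by the fact that $f$ permutes chains (hence last links and pseudo-circles) combined with the freeness of the $G$-action on the set $I$ of minima, and the time parameter is synchronized across all links of the roll by continuity at their common $\omega$-limit $P_i$ together with the non-periodicity of its regular trajectory. The only cosmetic difference is that the paper normalizes $f$ once by composing with a single $h^{-1}\in G$ and then checks that every last link with $\omega$-limit $P_i$ is fixed, whereas you extract a group element per link and collapse them all to one via the free action on pseudo-circles.
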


\begin{proof}
Fixed a $R_i$ consider a chain $C_{nk}$ whose last link $H_{nk}$ has $P_i$ as
$\zw$-limit. By Remark \ref{rem-1}, $f(C_{nk})$ is a chain of of order $n$, so
$f(C_{nk})=C_{nk'}$ and $f(H_{nk})=H_{nk'}$ for some $k'\zpe\{1,\dots,\zlma\}$.
Moreover $f(P_i )$ is the $\zw$-limit of $H_{nk'}$.

As $\F_n =\{T_{n1},\dots,T_{n\zlma}\}$ is an orbit of the action of $G$ on $\F$,
there exists $h\zpe G$ such that $h\zpu T_{nk}=T_{nk'}$, hence
$h\zpu H_{nk}=H_{nk'}$. Now by composing $f$ on the left with $h^{-1}$ we may
assume $f(C_{nk})=C_{nk}$, $f(H_{nk})=H_{nk}$ and $f(P_i )=P_i$.

Since $f$ commutes with any $\zF_t$ and the regular trajectory of $P_i$ is not
periodic, {\em there exists a single $t_i \zpe\RR$ such that $f=\zF_{t_i}$ on $P_i$}.

Now consider any $H_{ab}$, $(a,b)\zpe\NN'\zpor\{1,\dots,\zlma\}$, with $\zw$-limit
$P_i$. Then $f(H_{ab})$, which is the last link of $f(C_{ab})$, has $P_i$ as
$\zw$-limit too.

Recall that the $G$-orbit of $i$ possesses $\zlma$ elements, that is to say if
$i\znope I_G$. Therefore there is a single $H_{ab'}$ with $\zw$-limit $P_i$;
$H_{ab}$ itself. Indeed, there are only $\zlma$ chains of order $a$ and the
$\zw$-limits of their last links are included in the disjoint union
$\zung_{g\zpe G}(g\zpu A_i )$. In other words $f(H_{ab})=H_{ab}$.

But $H_{ab}$ is a non-periodic regular trajectory and $f$ commutes with the flow
$\zF_t$, so there exists $t'\zpe\RR$ such that $f=\zF_{t'}$ on $H_{ab}$.   As $P_i$
is the $\zw$-limit of $H_{ab}$ one has $f=\zF_{t'}$ on $P_i$ too, hence $t'=t_i$.
\end{proof}

From Proposition \ref{pro-2}, it immediately  follows:

\begin{corollary}\mylabel{cor-2}
For every roll $R_i$ there exist $t_i \zpe\RR$ and $g_i \zpe G$ such that
$f=g_i \zci\zF_{t_i}$ on ${\overline R}_i$.
\end{corollary}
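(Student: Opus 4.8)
The plan is to deduce this directly from Proposition \ref{pro-2} by a pure continuity argument, which is exactly why the corollary follows at once. By Proposition \ref{pro-2} one obtains, for each roll $R_i$, constants $t_i \zpe\RR$ and $g_i \zpe G$ with $f=g_i \zci\zF_{t_i}$ on $R_i$; I would retain these very same $t_i$ and $g_i$ in the statement for $\overline{R}_i$, so that no new data need to be produced.

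The key observation is that both sides of the equality $f=g_i \zci\zF_{t_i}$ are continuous self-maps of $M$. Indeed, $f$ is a homeomorphism by hypothesis, while $g_i \zci\zF_{t_i}$ is the composition of the diffeomorphism $g_i$ with the time-$t_i$ map $\zF_{t_i}$ of the flow of the complete vector field $X$, and hence is continuous. Thus $f$ and $g_i \zci\zF_{t_i}$ are two continuous maps that already coincide on the subset $R_i$.

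Now $R_i$ is, by the definition of topological closure, dense in $\overline{R}_i$, and $M$ is Hausdorff since it is a manifold. Two continuous maps into a Hausdorff space that agree on a dense subset agree on its closure, so the equality propagates from $R_i$ to all of $\overline{R}_i$, giving $f=g_i \zci\zF_{t_i}$ there. I do not expect any genuine obstacle: all the substantive work resides in Proposition \ref{pro-2}, and the extension to the closure is the standard density-and-continuity principle, which is why the result is labelled an immediate consequence.
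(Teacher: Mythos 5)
Your proposal is correct and is exactly the intended argument: the paper states the corollary as an immediate consequence of Proposition \ref{pro-2} without further proof, and the implicit reasoning is precisely your observation that the coincidence set of the two continuous maps $f$ and $g_i \zci\zF_{t_i}$ is closed (since $M$ is Hausdorff) and contains $R_i$, hence contains $\overline{R}_i$.
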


\begin{lemma}\mylabel{lem-4}
The family $\{{\overline R}_i \}_{i\zpe I}$ is locally finite
and $\zung_{i\zpe I}{\overline R}_i =M$.
\end{lemma}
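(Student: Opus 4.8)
The plan is to deduce the covering statement from local finiteness together with density, and to prove local finiteness by a compactness argument driven by the properness of $\mu$. First I would record that, by \ref{especial8.4}, every last link $H_{nk}$ has a single pseudo-circle $P_{\zl(n,k)}$ as its $\zw$-limit, so each $H_{nk}$ belongs to exactly one roll and $\zung_{i\zpe I}R_i=\zung_{n,k}H_{nk}$, which is dense in $M$ by \ref{especial8.5}. Granting that $\{\overline R_i\}_{i\zpe I}$ is locally finite, the union $\zung_{i\zpe I}\overline R_i$ is closed; being a closed set containing a dense set, it must equal $M$. Thus everything reduces to local finiteness.

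The structural input I would isolate is that each $A_i$, $i\zpe I$, is a \emph{trap}. On $\zpar A_i$ (radius $2r_i$) one has $Y=-Z$, which at the minimum $i$ is the inward radial field, so trajectories cross $\zpar A_i$ inward and never exit; moreover inside $A_i$ the sphere $E_i$ carries the limit field and is an attractor whose only pseudo-circle is $P_i$ (by \ref{especial8.3}, since the disjoint $A_l$ separate the $P_l$). Hence any trajectory that enters $A_i$ stays there with $\zw$-limit contained in $A_i$, and if that $\zw$-limit is a pseudo-circle it is $P_i$. Consequently a trajectory whose $\zw$-limit is $P_i$ meets no trap $A_l$ with $l\znoi i$ and enters $A_i$ exactly once, at the end. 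Outside $\zung_{i\zpe I}A_i$ one has $Y=-Z$, so $\zm$ strictly decreases along regular trajectories there.

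For local finiteness I would argue by contradiction. If $\{\overline R_i\}$ is not locally finite there is a point $y$ and infinitely many distinct indices $i_s$ with $\overline R_{i_s}$ meeting every neighborhood of $y$; choosing $z_s\zpe R_{i_s}$ close to these witnesses gives $z_s\zfl y$. Each $z_s$ lies on a last link converging to $P_{i_s}\zco E_{i_s}$, and by the normal form \ref{especial6b} at a minimum, $\zm=\zbv x\zbv^2+\zm(i_s)$ on $A_{i_s}$, so $\zpar A_{i_s}$ sits at level $\zm(i_s)+4r_{i_s}^2$. The traps $A_{i_s}$ are disjoint, and by local finiteness of $\{A_p\}_{p\zpe C}$ they eventually avoid a fixed neighborhood of $y$, so $z_s\znope A_{i_s}$ for large $s$. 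By the trapping property the forward trajectory from $z_s$ stays outside every trap until its single entry into $A_{i_s}$ across $\zpar A_{i_s}$; monotonicity of $\zm$ on that stretch yields $\zm(z_s)\zmai\zm(i_s)+4r_{i_s}^2\zmai\zm(i_s)$. Letting $s\zfl\zinf$ gives $\zm(i_s)\zmei\zm(z_s)\zfl\zm(y)$, so for large $s$ the minima $i_s$ lie in the compact set $\zm^{-1}([0,\zm(y)+1])$, which contains only finitely many critical points of $\zm$. This contradicts the distinctness of the $i_s$, proving local finiteness.

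The hard part is the trapping property together with the resulting $\zm$-monotonicity, that is, reading off from the explicit form of $Y$ that a trajectory converging to $P_i$ cannot enter any other $A_l$ and cannot re-exit $A_i$, so that $\zm$ decreases monotonically down to the entry level $\zm(i)+4r_i^2$. Once this bookkeeping is in place, properness of $\zm$ closes the argument at once, and the covering statement follows from the first paragraph.
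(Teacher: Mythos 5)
Your proof is correct and takes essentially the same route as the paper: the covering claim follows from density of $\zung_{n,k}H_{nk}$ plus closedness of a locally finite union of closed sets, and local finiteness rests on the lower bound $\zm\zmai\zm(i)$ on each roll $R_i$ combined with properness of $\zm$ and discreteness of $I$. The only differences are presentational --- you run the local finiteness step as a sequential contradiction rather than directly through the compact sublevel sets $\zm^{-1}((-\zinf,a])$, and you spell out the trapping/monotonicity argument behind $\zm(R_i)\zmai\zm(i)$, which the paper merely asserts from $P_i\zco A_i$.
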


\begin{proof}
For the first part it suffices to show that  $\{{R}_i \}_{i\zpe I}$ is locally finite.
 From the fact that $P_i$ is included in
$A_i$ follows that $\zm(R_{i})$ is low bounded by
$\zm(i)$. But $I$ is a discrete set and $\zm$ a non-negative
proper Morse function, so in every compact set
$\zm^{-1}((-\zinf,a])$ there are only a finite number of elements
of $I$. Therefore $\zm^{-1}((-\zinf,a])$ and of course
$\zm^{-1}(-\zinf,a)$ only intersect a finite number of rolls
$R_i$. Finally, observe that $M=\zung_{a\zpe\mathbb
R}\zm^{-1}(-\zinf,a)$.

By construction of $X$ (see Property \ref{especial8.5})
$\zung_{n\zpe\NN',\, k=1,\dots,\zlma}H_{nk}$ is dense in $M$.
On the other hand $\zung_{i\zpe I}{\overline R}_i$ is closed because
$\{{\overline R}_i \}_{i\zpe I}$ is locally finite, so
$\zung_{i\zpe I}{\overline R}_i$ is a closed set that includes
$\zung_{n\zpe\NN',\, k=1,\dots,\zlma}H_{nk}$.
\end{proof}

\begin{lemma}\mylabel{lem-5}
All scalars $t_i$ given by Corollary \ref{cor-2} are equal.
\end{lemma}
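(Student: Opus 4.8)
The plan is to repackage the data of Corollary \ref{cor-2} as a decomposition of $M$ indexed by the possible values of the scalars $t_i$, and then to use connectedness to force a single value. First I would set, for each $s\zpe\RR$,
$M_s=\zung_{i\zpe I,\, t_i=s}{\overline R}_i$, the union ranging over the (nonempty) rolls whose scalar from Corollary \ref{cor-2} equals $s$. By Lemma \ref{lem-4} the family $\{{\overline R}_i\}_{i\zpe I}$ is locally finite and covers $M$; hence every $M_s$ is closed, the coarser family $\{M_s\}_{s\zpe\RR}$ is again locally finite (a neighbourhood meeting finitely many ${\overline R}_i$ meets only finitely many $M_s$), and $\zung_{s\zpe\RR}M_s=M$.

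The key step will be to show that two \emph{different} pieces can meet only inside the zero set of $X$, i.e.\ $M_s\zin M_{s'}\zco X^{-1}(0)$ whenever $s\znoi s'$. If $q\zpe M_s\zin M_{s'}$, then $q\zpe{\overline R}_i$ with $t_i=s$ and $q\zpe{\overline R}_j$ with $t_j=s'$, so Corollary \ref{cor-2} gives $(g_i\zci\zF_s)(q)=f(q)=(g_j\zci\zF_{s'})(q)$. Were the $X$-trajectory of $q$ non-periodic, Lemma \ref{lem-3} applied to $X$ and $q$ would force $s=t_i=t_j=s'$, a contradiction; so the trajectory of $q$ must be periodic, and since $X$ has no periodic regular trajectory by \ref{especial8.2}, the point $q$ is a singularity, $q\zpe X^{-1}(0)$.

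Next I would delete the singular set. By \ref{especial8.1} the set $X^{-1}(0)$ is countable, so, $M$ being connected of dimension $m\zmai 2$, its complement $M-X^{-1}(0)$ is still connected. Over this complement the sets $M_s-X^{-1}(0)$ are closed in the subspace, pairwise disjoint by the previous step, cover $M-X^{-1}(0)$, and form a locally finite family; hence each is also open (its complement being the locally finite union of the remaining closed pieces), thus clopen in a connected nonempty space, so exactly one of them is nonempty. Finally, for every $i$ the roll $R_i$ consists of regular trajectories and so meets $M_{t_i}-X^{-1}(0)$; therefore this unique nonempty piece equals $M_{t_i}-X^{-1}(0)$ for every $i$, which proves that all the $t_i$ coincide.

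The only delicate points I anticipate are precisely the reduction of the overlaps to $X^{-1}(0)$ through Lemma \ref{lem-3}—this is what turns the closed cover $\{M_s\}$ into a \emph{genuine partition} once the countable singular set is removed—and the elementary but essential observation that excising a countable set from a connected manifold of dimension $\zmai 2$ preserves connectedness, so that the clopen dichotomy can be invoked.
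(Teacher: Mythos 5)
Your argument is correct and follows essentially the same route as the paper's proof: both use Lemma \ref{lem-4} to get a locally finite closed cover indexed by the scalar values, Lemma \ref{lem-3} together with the absence of periodic regular trajectories to confine the overlaps to the countable set $X^{-1}(0)$, and the connectedness of $M$ minus a countable set to conclude. The only cosmetic difference is that the paper splits $M$ into just two closed pieces ($t_i=t$ versus $t_i\znoi t$) and derives a contradiction, whereas you keep the full partition $\{M_s\}$ and invoke the clopen dichotomy directly.
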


\begin{proof}
Assume that the family $\{t_i \}_{i\zpe I}$ possesses two or more
elements. Fixed one of them, say $t$, set $D_1$ the union of
all ${\overline R}_{i}$ such that $t_{i}=t$ and $D_2$ the union of all
${\overline R}_{i}$ such that $t_{i}\znoi t$. By Lemma \ref{lem-4},
$D_1$ and $D_2$ are closed and $M=D_{1}\zun D_{2}$.

On the other hand if $p\zpe D_{1}\zin D_{2}$ then there exist
${\overline R}_{i}\zco D_1$ and ${\overline R}_{j}\zco D_2$ such that
$p\zpe{\overline R}_{i}\zin{\overline R}_{j}$; so
$f(p)=(g_i \zci\zF_{t_i})(p)=(g_j \zci\zF_{t_j})(p)$. As $t_i \znoi t_j$ from
Lemma \ref{lem-3} applied to $X$ and $p$ follows that the $X$-orbit of $p$
is periodic. Hence $X(p)=0$ since $X$ has
no periodic regular trajectories, which implies that $ D_{1}\zin D_{2}$
is countable. Consequently $M-D_{1}\zin D_{2}$ is connected. But
$M-D_{1}\zin D_{2}=(D_{1}-D_{1}\zin D_{2})\zun(D_{2}-D_{1}\zin D_{2})$
where the terms of this union are non-empty, disjoint and closed in
$M-D_{1}\zin D_{2}$, {\it contradiction}.
\end{proof}

Now composing $f$ with $\zF_{-t}$ where $t$ is the scalar given by
Corollary \ref{cor-2} and Lemma \ref{lem-5} we may assume, without lost
of generality, that $f(x)=g_x \zpu x$ for any $x\zpe M$ where $g_x \zpe G$.
For finishing the proof of the existence of $(g,t)\zpe G\zpor\RR$ such that
$f=g\zci\zF_t$  it suffices to apply the following result:

\begin{lemma}\mylabel{lem-6}
Consider a continuous and injective map $\zt\co M\zfl M$. Assume for every
$x\zpe M$ there exists $g_x \zpe G$ such that $\zt(x)=g_x \zpu x$. Then
$\zt =g$ for some $g\zpe G$.
\end{lemma}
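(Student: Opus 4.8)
The statement asserts that a map $\zt$ which is locally given by the $G$-action (possibly with the group element varying from point to point) is in fact globally a single element of $G$. The plan is to show that the set where $\zt$ agrees with a fixed $g\zpe G$ is open, so that connectedness of $M$ forces one such element to work everywhere. For each $g\zpe G$ set $M_g =\{x\zpe M\co \zt(x)=g\zpu x\}$. By hypothesis $M=\zung_{g\zpe G}M_g$, and each $M_g$ is closed since $\zt$ and the action of $g$ are continuous and $M$ is Hausdorff. The main point to establish is that each $M_g$ is also \emph{open}; granting this, $\{M_g\}_{g\zpe G}$ is a finite partition of the connected space $M$ into closed-and-open sets, so exactly one $M_g$ is nonempty and equals $M$, giving $\zt=g$ as claimed.

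To prove $M_g$ is open I would argue by contradiction at a boundary point. Suppose $x\zpe M_g$ is not interior; then there is a sequence $x_j \zfl x$ with $x_j \znope M_g$, and since $G$ is finite we may pass to a subsequence for which $\zt(x_j )=h\zpu x_j$ for a single fixed $h\znoi g$. Passing to the limit and using continuity of $\zt$ and of the two actions yields $g\zpu x=\zt(x)=h\zpu x$, hence $(g^{-1}h)\zpu x=x$, so $x$ is a fixed point of the nontrivial element $g^{-1}h$, i.e.\ $x\zpe I_G$. This is where the hypotheses on the action enter: the isotropy $I_G=I_G^{\max}$ is a closed regular submanifold of codimension $1$, so it has empty interior, and one must rule out the possibility that the whole relevant neighborhood collapses into $I_G$.

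The crux of the argument — and the step I expect to be the main obstacle — is to exploit injectivity of $\zt$ together with the structure of $I_G^{\max}$ to derive a genuine contradiction from $x\zpe I_G$. The idea is that near a maximal isotropy point the offending element $g^{-1}h$ acts as a reflection in the hyperplane $I_{g^{-1}h}^{\max}$, so the points $x_j$ lie (after refining the subsequence) on one definite side of this hyperplane. On that side $g\zpu x_j$ and $h\zpu x_j =(g(g^{-1}h))\zpu x_j$ are reflections of each other across $g\zpu I_{g^{-1}h}^{\max}$ and are therefore distinct, yet both must equal $\zt$ evaluated at points converging to the same limit; comparing $\zt$ at $x$ (where it equals $g\zpu x=h\zpu x$) with its values at the nearby $x_j$, and invoking that $\zt$ is injective and continuous, forces the two one-sided limits of $\zt$ to coincide while the reflected images separate, which is the desired \emph{contradiction}. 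Once openness of each $M_g$ is secured, the connectedness argument closes the proof immediately.
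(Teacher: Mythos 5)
There is a genuine gap: your ``main point'', that \emph{every} $M_g=\{x\in M: \zt(x)=g\zpu x\}$ is open, is false, and the deferred ``crux'' cannot be completed because no contradiction exists in the situation you set up. Indeed, suppose the lemma's conclusion holds with $\zt=g$ (a perfectly legitimate continuous injective $\zt$). Then for $h\neq g$ one has $M_h=\{x: g\zpu x=h\zpu x\}=I_{g^{-1}h}=I_{g^{-1}h}^{\max}$, which in general is a nonempty closed codimension-one submanifold, hence has empty interior and is \emph{not} open. Every point of $M_h$ is then a non-interior point of the kind you analyze, yet $\zt$ is injective, so your hoped-for contradiction from ``$x\in M_h$ not interior'' simply is not there. (Relatedly, the $M_g$ do not form a partition: they overlap along $I_G$.) The underlying problem is that your sequence argument only gives $\zt(x_j)=h\zpu x_j$ along a sequence; the injectivity violation you are reaching for requires knowing that $\zt$ coincides with one fixed group element on a whole open half-ball on one side of the reflecting hyperplane and with a different fixed element on the other side, so that $\zt$ identifies reflected pairs of points.

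The paper secures exactly that by working on $M-I_G$ first: the sets $D_g=M_g-I_G$ are pairwise disjoint (the action on $M-I_G$ is free), closed in $M-I_G$, and, being finitely many with union $M-I_G$, each is also open, hence a union of connected components of $M-I_G$. One may then assume $D_e\neq\emptyset$ and show $\overline{D_e}$ is open in $M$: at a point $p\in I_h^{\max}$ adjacent to $D_e$, choose coordinates in which $h$ is the reflection $\zG(x)=(x_1,\dots,x_{m-1},-x_m)$; the two open half-balls $S^{\pm}$ each lie in a single $D_{\bar g}$, one of them in $D_e$, and if the other lay in $D_{\bar g}$ with $\bar g\neq e$ then continuity at $p$ forces $\bar g\zpu p=p$, so $\bar g=h$ by Lemma \ref{lem-1}\ref{especial4}, whence $\zt(0,\dots,0,-\ze)=(0,\dots,0,\ze)=\zt(0,\dots,0,\ze)$, contradicting injectivity. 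Connectedness of $M$ then gives $\overline{D_e}=M$. If you want to keep your formulation, you must replace ``each $M_g$ is open'' by this statement about the single set $\overline{D_e}$; as written, the proposal proves a false intermediate claim.
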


\begin{proof}
Given $g\zpe G$ set $D_g \co=\{p\zpe M-I_G \co\zt(p)=g\zpu p\}$. As the space
is Hausdorff $G_g$ is closed in $M-I_G$. Moreover $D_g \zin D_h=\zva$ when $g\znoi h$
since the action of $G$ on $M-I_G$ is free. As $G$ is finite and
$M-I_G =\zung_{h\zpe G}D_h$, every $D_g$ is open too so
union of connected components of $M-I_G$.

Some of the sets $D_g$, $g\zpe G$, has to be non-empty and by composing $\zt$ on
the left with a suitable element of $G$ one may assume $D_e\znoi\zva$.
Let us see that $\overline D_e$ is open in $M$. If so the proof is finished since
$M$ is connected and necessarily $\overline D_e =M$.

Consider any $p\zpe\overline D_e$. If $p\zpe M-I_G$ then $p\zpe D_e$ and it is an
interior point. Now assume $p\zpe I_h =I_{h}^{\max}$ for some $h\zpe G-\{e\}$.
Then around $p$ there exist coordinates $(x_1 ,\dots,x_m )$ whose domain $D$ is
diffeomorphic through these coordinates to an open ball $B(r)$ such that
$p\zeq 0$ and $h$ is given by the symmetry $\zG(x)=(x_1 ,\dots, x_{m-1},-x_m )$.

Let $S^+$ be the open half domain defined by $x_m >0$ and $S^-$ that given by
$x_m <0$. If $r$ is sufficiently small then $S^+ \zun S^- \zco M-I_G$, and
$I_G \zin D$ and $I_h \zin D$ are equal and defined by $x_m =0$ (see \ref{especial4} of
Lemma \ref{lem-1}). As $p\zpe\overline D_e$ and $D_e$ is an union of connected
components of $M-I_G$ necessarily one at least of the foregoing open half domains is
included in $D_e$. Assume $S^+ \zco D_e$, the other case is similar.
If $S^- \znoco D_e$ then $S^- \zin D_e =\zva$ and there is some $\bar g\zpe G-\{e\}$
such that  $S^- \zco D_{\bar g}$ because $S^-$ is connected.

By continuity $\bar g\zpu p=\zt(p)$ and $e\zpu p=\zt(p)$, hence $\bar g\zpu p=p$
and by \ref{especial4} of Lemma \ref{lem-1} one has $\bar g=h$. Therefore $\zt$ on
$S^-$ equals $h$. Thus
$\zt(0,\dots,0,-\ze)=(0,\dots,0,\ze)=e\zpu(0,\dots,0,\ze)=\zt(0,\dots,0,\ze)$, $\ze>0$,
and $\zt$ is not injective, {\em contradiction}.
In short $S^- \zco D_e$ and necessarily $p\zpe D\zco\overline D_e$.
\end{proof}

Finally, if $g\zci\zF_t =Id_M$ then $Id_M =(g\zci\zF_t )^\zlma
=g^\zlma \zci\zF_{\zlma t}=\zF_{\zlma t}$. As $X$ has regular non-periodic
trajectories $t=0$, so $g=e$. This fact implies the injectivity of the morphism from
$G\zpor\RR$ to $\Aut_{0}(X)$. Therefore {\em  the main theorem is proved under
the supplementary hypothesis $I_G =I_G^{\max}$.}

\begin{remark}\mylabel{rem-2}
Consider a function $\zf\co M\zfl\RR$ that is $G$-invariant, positive and
bounded. Then $\zf X$ is a complete vector field. Besides $X$ and $\zf X$ have
the same trajectories (with different speeds but the same orientation by the time),
$\za$ and $\zw$-limits, pseudo-circles, rolls, rivets and chains. Therefore reasoning
as before but this time with $\zf X$ shows that
$(g,t)\zpe G\zpor\RR\zfl g\zci{\tilde \zF}_t\zpe\Aut_{0}(\zf X)$ is a group isomorphism
where ${\tilde \zF}_t$ is the flow of $\zf X$.

In other words $\zf X$ is a suitable vector field too.
\end{remark}

\section{The general case}\mylabel{sec-4}
In this section the main result will be proved in the general case by reducing it to
the almost free one.

Given $g\zpe G-\{e\}$ let $J_g$ be the set of those points $p\zpe I_g$ such that the
dimension of $I_g$ at $p$ is $\zmei m-2$. Set  $J_G\co=\zung_{g\zpe G-\{e\}}J_g$. It is
easily seen that $J_G$ is a $G$-invariant closed set and $M-J_G$ a $G$-invariant dense
open set.

One will say that {\em the dimension of a point $p\zpe J_G$ is zero} if the dimension
at $p$ of every $I_g$ such that $p\zpe J_g$, is zero. Let $S_0$ be the set of all points
of $J_G$ of dimension zero. Clearly $S_0$ is $G$-invariant and $S_0 \zco J_G$.

By making use of normal coordinates with respect to a $G$-invariant Riemannian metric,
centered at points of $J_G$, it is easily checked that:

\begin{enumerate}[label={\rm (\arabic{*})}]
\item\mylabel{especial9.1} $S_0$ has no accumulation point so it is countable and closed.
\item\mylabel{especial9.2} Every $q\zpe S_0$ possesses a neighborhood whose
intersection with $J_G$ equals $\{q\}$.
\item\mylabel{especial9.3} For any $q\zpe J_G -S_0$ and any neighborhood $A$ of $q$
the set $A\zin J_G$ is uncountable.
\end{enumerate}

Since $M$ is paracompact (even more $\zs$-compact) from \ref{especial9.1} and
\ref{especial9.2} follows the existence of a locally finite
family of disjoint open sets $\tilde\A =\{\tilde A_q\}_{q\zpe S_0}$ such that every
$\tilde A_q \zin S_0=\{q\}$. As $S_0$  is $G$-invariant and $G$ is finite shrinking the
elements of $\tilde\A$ allows us to assume  that this family is $G$-invariant.
Even more one can suppose that each $\tilde A_q$ is a domain of normal coordinates
centered at $q$ of some $G$-invariant Riemannian metric.

For every $q\zpe S_0$ consider a set $q\zpe C_q \zco \tilde A_q$ that in the normal coordinates
mentioned before is a closed non-trivial segment sufficiently small.
Set $D_q \co =\{(g,p)\zpe G\zpor S_0 \co g\zpu p=q\}$ and
$E_q \co =\zung_{(g,p)\zpe D_q}g\zpu C_p$. Then $E_q \zco\tilde A_q$ so the family
$\{E_q \}_{q\zpe S_0}$ is locally finite, hence $S_1 \co=\zung_{q\zpe S_0}E_q$ is
closed. Moreover $S_1$ is $G$-invariant and any neighborhood of any point of $S_1$
includes uncountably many points of $S_1$. On the other hand we may assumed that
$M-S_1$ is connected without loss  of generality.

Thus the set  $\tilde M\co =M-(J_G \zun S_1 )$ is $G$-invariant, connected,
dense and open, since $S_1$ can be enclosed in dimension one and $J_g$ in dimension $m-2$
and $M-S_1$ was connected. Besides each neighborhood of every point of $J_G \zun S_1$
contains uncountably many elements of $J_G \zun S_1$.

On the other hand the action of $G$ on $\tilde M$ is almost free. Indeed, if
$q\zpe I_g \zin \tilde M$ for some $g\zpe G-\{e\}$ then $q\znope J_g$ so the dimension
of $I_g$ at $q$ equals $m-1$ and $q\zpe I_{g}^{\max}$.

By \cite[Proposition 5.5]{TV} there exists a bounded function $\zf\co M\zfl\RR$, which
is positive on $\tilde M$ and vanishes on $M-\tilde M$, such that the vector field $\hat X$
on $M$ defined by $\hat X=\zf X$ on $\tilde M$ and  $\hat X=0$ on $M-\tilde M$ is
differentiable.

Notice that $g_{*}^{-1} (\hat X)$ equals $(\zf\zci g) X$ and zero on $M-\tilde M$. Therefore
by  taking $\zlma^{-1}\zsu_{g\zpe G} (\zf\zci g)$ instead of $\zf$ we my assume that $\zf$ is
$G$-invariant, which implies that $\hat X$ is $G$-invariant too.

Given a singularity $p$ of $\hat X$ one has two possibilities:

\begin{enumerate}[label={\rm (\alph{*})}]
\item\mylabel{especial10a} Any neighborhood of $p$ contains uncountably many
zeros of $\hat X$; that is to say $p\zpe M-\tilde M$.
\item\mylabel{especial10b} There is a neighborhood of $p$ that only includes countably
many zeros of $\hat X$; that is to say $p\zpe\tilde M$ and $X(p)=0$.
\end{enumerate}

Consider $f\zpe\Aut_0 (\hat X)$. From \ref{especial10a} and \ref{especial10b}
follows that $f(M-\tilde M)=M-\tilde M$ and $f(\tilde M)=\tilde M$.  Thus the
restriction of $f$ to $\tilde M$ is a continuous automorphism of $\hat X_{\zbv M}=\zf X$
and, by Section \ref{sec-3}, there exist $g\zpe G$ and $t\zpe\RR$ such that
$f=g\zci\hat\zF_t$ on $\tilde M$ where $\hat\zF_t$ is the flow of $\hat X$.
By continuity $f=g\zci\hat\zF_t$ everywhere. The uniqueness of $g$ and $t$ is obvious.
{\em In short the main result is proved in the general case.}

\section{Actions on manifolds with boundary}\mylabel{sec-5}
Let $P$ be a $m$-manifold with nonempty boundary  $\zpar P$. Then each homeomorphism
$f\co P\zfl P$ induces a homeomorphism $f\co \zpar P\zfl\zpar P$. Therefore the same
reasoning as in Section 4 of \cite{TV} shows that the main result of the present paper
also holds for a connected manifold $P$, of dimension $m\zmai 2$, with
nonempty boundary and a finite subgroup $G$ of  $\Diff(M)$.

\section{Examples}\mylabel{sec-6}

\begin{example}\mylabel{exaPla}
{\rm On $\RR^2$ consider the group of two elements $G=\{e,g\}$ where $g(x)=(-x_1 ,x_2 )$.
Then the action of $G$ on $\RR^2$ is almost free and $I_G^{\max}=\{0\}\zpor\RR$. For
constructing a suitable vector field $X$ as in Section \ref{sec-3}, one can start with the Morse
function $\zm=(x_1^2 -1)^2 +x_2^2$ that has two minima at $(1,0)$ and $(-1,0)$ respectively
and a saddle at the origin.

Therefore at the end of the process $X$ has two pseudo-circles around $(1,0)$ and $(-1,0)$
respectively. Moreover the set of singularities of $X$ is countable and accumulates towards
$(1,0)$, $(-1,0)$ and the infinity. Observe that $I_G^{\max}$ consists of a singular point and
two regular trajectories with the singular point as $\za$-limit and empty $\zw$-limit.

In a similar way, on $S^2$ one may consider the group $G=\{e,g\}$ where now
$g(x)=(x_1 ,x_2 ,-x_3 )$ and the Morse function $\zm=2x_1^2 +x_2^2$ that has two minima
at $(0,0,\zmm 1)$, two maxima at $(\zmm 1,0,0)$ and two saddles at $(0,\zmm 1,0)$. The
action of $G$ is almost free and there is no minimum on
$I_G^{\max}=S^2 \zin(\RR^2 \zpor\{0\})$.}
\end{example}

\begin{example}\mylabel{exaMor}
Let $M$ be a connected compact manifold of dimension $m\zmai 2$. Given $G$, a finite group
of diffeomorphisms of $M$, and a $G$-invariant Morse function $\zm$, let $X$ be the gradient
vector field of $\zm$ with respect to a $G$-invariant Riemannian metric. Then, although the group of smooth automorphisms of $X$, namely $\Aut(X)$, may equal $G\zpor\RR$ (e.g.\ in \cite[Example 5.2]{TV}), the group
$\Aut_{0}(X)$ is strictly greater than $G\zpor\RR$.

Indeed, first note that there always exist a minimum and a maximum of $\zm$, that we  denote by  $p$ and $q$, and a
trajectory $\zg$ of $X$ whose $\za$-limit and $\zw$-limit are $p$ and $q$ respectively. Consider a closed sufficiently small
$(m-1)$-disk $D$ transverse to $X$ and intersecting $\zg$ just once. We may suppose, without lost of
generality, that every trajectory of $X$ intersects $D$ at most once and if so its $\za$-limit
equals $p$ and its $\zw$-limit $q$.

Let $E$ be the set of those points of $M$ whose trajectory meets $D$. Then $E$ is diffeomorphic
to $D\zpor\RR$  in such a way that $X$ becomes $\zpar/\zpar s$ where $D\zpor\RR$ is endowed
with coordinates $(x,s)=(x_1 ,\dots,x_{m-1},s)$.

For each continuous function $\zl\co D\zfl\RR$ such that $\zl(\zpar D)=0$ one defines
$f\zpe\Aut_{0}(X)$ to be $f=Id$ on $M-E$ and $f(x,s)=\zF_{\zl(x)}(x,s)=(x,s+\zl(x))$ on $E$, where
$\zF_t$ is the flow of $X$. As $\overline E -E=((\zpar D)\zpor\RR)\zun\{p,q\}$ our $f$ is continuous.
Its inverse is given by $-\zl$ and obviously $f$ is an automorphism of $X$, which in general does
not belong to $G\zpor\RR$.

Other way for constructing such a $f$ is to consider a homeomorphism $\zt\co D\zfl D$ with
$\zt_{\zbv\zpar D}=Id_{\zbv\zpar D}$ and set $f(x,s)=(\zt(x),s)$ on $E$, $f=Id$ elsewhere.

Observe that an analogous construction can be done if the gradient field is slightly modified,
namely if one adds a finite number of new singularities of index zero. Thus, in general, the group
of continuous automorphisms of vector fields constructed in \cite{TV} is strictly greater than
$G\zpor\RR$ (for the non-compact case the reasoning above can be easily adapted if there is
at least a maximum).  In other words, \emph{these vector fields determine $G$ in the smooth
category but not in the continuous one.}
\end{example}

The next two examples are extensions of Corollary \ref{mainco} to situations where
differentiability at every point is not assured. One has:

\begin{proposition}\mylabel{pro-3}
Consider a finite group $G$ of homeomorphisms of a connected $C^\zinf$ manifold $M$ of
dimension $\zmai 2$. Let $A$ be a $G$-invariant open set of $M$. Assume that:
\begin{enumerate}[label={\rm (\alph{*})}]
\item\mylabel{pro-3a} $A$ is connected and dense, and every neighborhood of each point
of $M-A$   contains uncountably many points of $M-A$.
\item\mylabel{pro-3b} Under restriction each element of $G$ is a diffeomorphism of $A$.
\end{enumerate}

Then there exists a $G$-invariant smooth, and therefore continuous, flow
 $\zq\co\RR\zpor M\zfl M$  such that the map
 $$\begin{array}{rcl}
G\zpor\RR & \to & \Aut_{0}(\zq)\\
(g,t) & \mapsto & g\zci\zq_t
\end{array}$$
is a group isomorphism.
\end{proposition}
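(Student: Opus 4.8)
The plan is to transcribe the argument of Sections \ref{sec-3} and \ref{sec-4}, carried out on $A$ in place of $M$, using hypothesis \ref{pro-3a} to fold $M-A$ into the singular locus. By \ref{pro-3b} the finite group $G$ acts by diffeomorphisms on $A$, which is a connected $C^{\zinf}$ manifold of dimension $\zmai 2$ (being open and dense in $M$). First I would run the reduction of Section \ref{sec-4} on this smooth action to obtain a $G$-invariant, dense, connected open set $\tilde A\zcoi A$ on which $G$ acts almost freely, arranged so that every neighbourhood of every point of $A-\tilde A$ meets $A-\tilde A$ in uncountably many points (and, as in Section \ref{sec-4}, taking care to keep $\tilde A$ connected, which in dimension two needs the same adjustment used there). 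Because $\tilde A$ is open in $A$ and $A$ is open in $M$, the set $\tilde A$ is open and dense in $M$; and combining the thickness of $A-\tilde A$ with \ref{pro-3a} for the points of $M-A$, the complement $M-\tilde A=(M-A)\zun(A-\tilde A)$ acquires the property that each of its points has uncountably many points of $M-\tilde A$ in every neighbourhood. Thus $\tilde A$ will play exactly the role of $\tilde M$ in Section \ref{sec-4}.

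On $\tilde A$, where the action is almost free, Section \ref{sec-3} furnishes a complete $G$-invariant vector field $X$ describing $G$. I would then invoke \cite[Proposition 5.5]{TV} to produce a bounded function that is positive on $\tilde A$ and vanishes on $M-\tilde A$, and after replacing it by its $G$-average (as in Section \ref{sec-4}) obtain a bounded, $G$-invariant $\zf$, positive on $\tilde A$ and zero on $M-\tilde A$, such that the field $\hat X$ equal to $\zf X$ on $\tilde A$ and to $0$ on $M-\tilde A$ is smooth on $M$. Since $\zf$ is bounded and $X$ is complete, $\hat X$ is complete; let $\zq$ be its smooth, hence continuous, flow. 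As $\hat X$ is $G$-invariant, each $g\zpe G$ commutes with $\zq$, so $(g,t)\mapsto g\zci\zq_t$ takes values in $\Aut_0(\zq)$ and is a homomorphism. For injectivity, if $g\zci\zq_t=Id_M$ then raising to the order of $G$ yields $\zq_{\zlma t}=Id_M$, and since $\hat X$ has regular non-periodic trajectories this forces $t=0$ and then $g=e$, exactly as at the end of Section \ref{sec-3}.

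The heart of the proof is surjectivity. Given $f\zpe\Aut_0(\zq)=\Aut_0(\hat X)$, note that the fixed-point set of $\zq$ is $\hat X^{-1}(0)=(M-\tilde A)\zun\{\text{zeros of }X\text{ in }\tilde A\}$. The zeros of $X$ form a countable subset of $\tilde A$, so a point possesses a neighbourhood meeting this fixed set in only countably many points precisely when it lies in $\tilde A$, whereas every point of $M-\tilde A$ has uncountably many such points in each neighbourhood by the first paragraph. A homeomorphism commuting with $\zq$ carries fixed points to fixed points and respects this dichotomy, so $f(\tilde A)=\tilde A$. Consequently $f|_{\tilde A}$ is a continuous automorphism of $\hat X|_{\tilde A}=\zf X$; by Remark \ref{rem-2}, together with the almost-free case of Section \ref{sec-3}, there are unique $g\zpe G$ and $t\zpe\RR$ with $f=g\zci\zq_t$ on $\tilde A$. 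Finally, as $\tilde A$ is dense in $M$ and both $f$ and $g\zci\zq_t$ are continuous, they coincide on all of $M$, so $f=g\zci\zq_t$ and the map is onto.

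I expect the decisive step to be the first one: manufacturing $\tilde A$ so that its complement $M-\tilde A$ is uniformly thick, since the whole surjectivity argument rests on this thickness being the topological signature that an automorphism must preserve. Hypothesis \ref{pro-3a} is precisely what makes the $M-A$ portion thick, while the Section \ref{sec-4} construction supplies thickness for $A-\tilde A$ and keeps $\tilde A$ connected and dense. Once $\tilde A$ is in hand, everything else is a faithful transcription of the constructions and estimates of Sections \ref{sec-3}--\ref{sec-4}.
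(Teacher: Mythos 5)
Your proposal is correct and follows essentially the same route as the paper: your $\tilde A$ is exactly the paper's $\tilde M=A-(J_G\zun S_1)$, and the surjectivity argument via the countable/uncountable dichotomy of singularities is precisely the reasoning the paper invokes from Section \ref{sec-4}. The only difference is that you spell out the details the paper leaves implicit.
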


\begin{proof}
Consider $G$ under restriction as a group of diffeomorphisms of $A$ and define $J_G$ and
$S_1$ as in Section \ref{sec-4} (for $A$ of course). Set
$\tilde M=M-(J_G \zun S_1 \zun(M-A))=A-J_G \zun S_1$. Then the action of $G$ on $\tilde M$
is almost free, which gives rise to a suitable vector field $X$ on $\tilde M$.

Since $J_G \zun S_1 \zun(M-A)$ is a closed set of $M$ with empty interior, a vector field
$\hat X$ on $M$ that
\begin{enumerate}[label={\rm (\arabic{*})}]
\item\mylabel{dem1} vanishes in $J_G \zun S_1 \zun(M-A)$ and
\item\mylabel{dem2} on $\tilde M$  equals $\zf X$ for a suitable function
$\zf\co M\zfl\RR$,
\end{enumerate}
can be constructed as in Section \ref{sec-4}.

The flow of $\hat X$ has the required properties. Indeed, any neighborhood of any point of
$J_G \zun S_1 \zun(M-A)$ includes uncountably many points of this set, and one can
reason as in the second part of Section \ref{sec-4}.
\end{proof}

\begin{example}\mylabel{exaSph}
{Following the notation in \cite[pp.\ 23--24]{HIZ}, in $\CC^4$ endowed with coordinates $z=(z_0 ,z_1 ,z_2 ,z_3 )$ the equations
\begin{gather*}
z_0^3 +z_1^2 +z_2^2 +z_3^2 =0\\
\zsu_{k=0}^3 z_k \bar z_k =1
\end{gather*} define a
smooth real submanifold that is diffeomorphic to the standard sphere $S^5$ such that
 $\beta(z)=(e^{2\zp i/3}z_0 ,z_1 ,z_2 ,z_3 )$ defines a diffeomorphism of
$S^5$ of order three, whose set of fixed points is (diffeomorphic to) $\RR P^3$ \cite[REMARKS p.\ 24]{HIZ}.

The (topological) suspension of $S^5$ and that of $\beta$ give rise to a homeomorphism
$f\co S^6\zfl S^6$ of order three, whose set of fixed points is (homeomorphic to) the
suspension of $\RR P^3$ in such a way that the vertices are the poles. Therefore
$f$ cannot be smoothed otherwise the suspension of $\RR P^3$ has to be a differentiable
manifold, which is not the case.

Let $G$ be the group of homeomorphisms of $S^6$ spanned by $f$, whose order equals
three. Clearly $G$ cannot be smoothed. However, away of the poles $G$ is a group of
diffeomorphisms.

Consider a meridian (that is the intersection of $S^6 \zco\RR^7$ with a plane passing
through the origin and the poles) and saturate it under the action of $G$ for constructing
a $G$-invariant compact set $C$. Finally set $A=S^6 -C$ and apply Proposition \ref{pro-3}
for concluding that, even if $G$ cannot be smoothed, there exists a differentiable flow on
$S^6$ that determines $G$.}
\end{example}

\begin{theorem}\mylabel{exaTop}
Let $G$ be a finite group of homeomorphisms of a connected compact topological $4$-manifold
$M$ with no boundary. Assume that the action of $G$ is free. Then there exist a continuous flow $\widetilde\zF$
that determines $G$.
\end{theorem}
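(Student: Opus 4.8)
The plan is to reduce the statement to the almost free construction of Section \ref{sec-3} by smoothing $M$ away from a suitable fat $G$-invariant set and then extending the resulting flow continuously; the obstruction to global smoothness is what forces us to stay in the topological category. Since the action is free and $G$ is finite, the quotient $N\co =M/G$ is a connected compact topological $4$-manifold and the projection $\zp\co M\zfl N$ is a regular covering with deck group $G$. The key dimension-specific input is Quinn's smoothing theorem: \emph{every connected non-compact topological $4$-manifold (without boundary) admits a smooth structure}. This is exactly the feature peculiar to dimension four, and it is why the statement is confined to $4$-manifolds (compare Remark \ref{mainre}).

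First I would fix a tame (locally flat) arc $J\zco N$, i.e.\ a closed subset homeomorphic to $[0,1]$. Being one-dimensional, $J$ is \emph{fat} (every neighbourhood of each of its points meets $J$ in uncountably many points) and has empty interior, while $N-J$ is connected because $J$ has codimension three. As $J$ is a non-empty closed subset of the compact connected manifold $N$, the open set $N-J$ is non-compact, hence smoothable by Quinn's theorem; fix such a smooth structure. Setting $C\co =\zp^{-1}(J)$ and $A\co =M-C=\zp^{-1}(N-J)$, the map $\zp$ restricts to a covering $A\zfl N-J$, so pulling the smooth structure back along $\zp$ makes $A$ a smooth manifold on which, since $\zp\zci g=\zp$ for every $g\zpe G$, each $g$ is a local diffeomorphism and hence a diffeomorphism. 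Because $\zp$ is an open local homeomorphism, $C$ is a $G$-invariant closed fat set with empty interior and $A$ is open and dense; moreover $A$ is connected, as removing the codimension-three set $J$ does not alter $\zp_1 (N)$, so the monodromy $\zp_1 (N-J)\zfl G$ remains surjective. Thus $A$ is a connected smooth $4$-manifold carrying a free action of $G$, which is in particular almost free (here $I_G=I_G^{\max}=\zva$), so Section \ref{sec-3} applies verbatim and yields a complete $G$-invariant suitable vector field $X$ on $A$.

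Now I would mimic the proof of Proposition \ref{pro-3}. Following Remark \ref{rem-2} I replace $X$ by $\zf X$, where $\zf\co M\zfl[0,1]$ is continuous, $G$-invariant, positive on $A$ and vanishing exactly on $C$; by that remark $\zf X$ is still suitable on $A$, so $G\zpor\RR\zfl\Aut_{0}(\zf X)$ is an isomorphism \emph{within} $\Homeo(A)$. The role of the damping factor $\zf$, exactly as in Section \ref{sec-4} and as in the treatment of the fat locus in Example \ref{exaSph}, is to make the flow of $\zf X$ extend to a \emph{continuous} flow $\widetilde\zF$ on the whole topological manifold $M$ by declaring $\widetilde\zF_t =Id$ on $C$; the fat set $C$ then plays the part of $M-A$ so that hypothesis \ref{pro-3a} of Proposition \ref{pro-3} is in force. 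Granting the extension, the automorphism count is formal: any $f\zpe\Aut_{0}(\widetilde\zF)$ must preserve $C$ and $A$ by the fat-versus-discrete dichotomy of Section \ref{sec-4} (each point of $C$ has neighbourhoods meeting the stationary set $C$ in uncountably many points, whereas the stationary set of $\widetilde\zF$ inside $A$ is discrete), so $f_{\zbv A}\zpe\Aut_{0}(\zf X)$; by the almost free case already settled, $f_{\zbv A}=g\zci\widetilde\zF_t$ for a unique $(g,t)\zpe G\zpor\RR$, and by density of $A$ and continuity $f=g\zci\widetilde\zF_t$ on all of $M$. Injectivity is checked as at the end of Section \ref{sec-3}, giving the desired isomorphism.

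The main obstacle is precisely this continuous extension of the flow across $C$. Unlike in Proposition \ref{pro-3}, the ambient $M$ carries no smooth structure along $C$, so there is no tangent bundle there and one cannot produce a globally defined vector field nor invoke the smooth extension-by-zero of \cite[Proposition 5.5]{TV}; only a continuous flow is available, which is all the statement requires. I would handle it working in topological charts about points of $C$ (these exist since $M$ is a topological manifold), choosing $\zf$ to vanish fast enough that in each such chart $\zf X$ is uniformly small near $C$: a displacement estimate of the type $\zbv\widetilde\zF_t (x)-x\zbv\zmei\zil_{0}^{t}\zbv\zf X\zbv\,ds$ then keeps trajectories that start near $C$ close to $C$ on bounded time intervals, yielding joint continuity of $\widetilde\zF$ at points of $C$. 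The delicate point is to reconcile this with completeness on $A$ (no trajectory may reach $C$ in finite time, i.e.\ $\zf$ must not decay too quickly along trajectories whose $\zw$-limit lies on $C$); balancing these two competing demands in the purely topological charts, with no help from a smooth structure at $C$, is the step I expect to require the most care.
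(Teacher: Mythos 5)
Your reduction to the almost free case is sound, but there is a genuine gap at the step you yourself flag: the continuous extension of the flow across $C=\zp^{-1}(J)$. Because you remove the \emph{entire} arc $J$ from the quotient before invoking Quinn, the whole set $C$ lies outside the smooth locus, and your proposed displacement estimate $\zbv\widetilde\zF_t (x)-x\zbv\zmei\zil_{0}^{t}\zbv\zf X\zbv\,ds$ is not available: in a merely topological chart around a point of $C$ the quantity $\zbv X\zbv$ is undefined (the exotic smooth structure on $A$ bears no controlled relation to the topological coordinates of $M$ near $C$, and the trajectories of $X$ need not even be differentiable curves in those coordinates), so there is no a priori bound on how far the reparametrized flow moves points of $A$ close to $C$, and ``choosing $\zf$ to decay fast enough'' has nothing to be measured against. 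This is precisely the difficulty the paper's proof is engineered to avoid: there one removes only the single point $a$ from $P=M/G$, smooths $P'=P-\{a\}$ by Quinn and pulls back to $M'=M-\zp^{-1}(a)$, and takes the arc $C$ with $a$ as a \emph{vertex}, so that $\zp^{-1}(C-\{a\})$ is a closed subset of the smooth manifold $M'$; the field is then killed along it smoothly via \cite[Proposition 5.5]{TV} exactly as in Section \ref{sec-4}, and the only points where no smooth structure exists are the finitely many points of $\zp^{-1}(a)$.

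The remaining continuity of $\widetilde\zF$ at those finitely many points is not obtained by a metric estimate either, but by the argument of Lemma \ref{lem-7} and Corollary \ref{cor-3}: the quotient flow on $P'$ extends continuously to $P$ identified with the Alexandroff compactification $\A(P')$ (using that proper maps extend to one-point compactifications), and the extended flow on $M$ is recovered as a lift of $\widetilde\zq\zci(Id\zpor\zp)$ through the covering $\zp\co M\zfl P$, the lifting criterion being satisfied because $\RR$ is contractible. Your automorphism count (fat stationary set versus discrete singularities, then density and continuity) matches the paper and is fine, but as written the proof is incomplete until the extension step is replaced by an argument of this kind; with your choice of removing all of $J$ from the smooth locus, I do not see how to carry it out.
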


We devote the rest of this section to the proof of Theorem \ref{exaTop}.

Let $P=M/G$ be the topological
quotient manifold and $\zp\co M\zfl P$ the canonical projection, which is a covering.

Fix a point $a$ of $P$. Then $P'\co=P-\{a\}$ has a structure of smooth manifold (Quinn \cite{QU}).
The pull-back of this structure defines a smooth structure in
$M'\co=M-\zp^{-1}(a)=\zp^{-1}(P')$ in such a way that the natural action of $G$ on $M'$
is smooth and $\zp\co M'\zfl P'$ is a smooth covering.

Consider a set $a\zpe C\zco P$ that with respect to some  topological coordinates centered at $a$
is a closed non-trivial segment sufficiently small; assume that $a$ is one of its vertices.
Then $\zp^{-1}(C)$ is a disjoint union of compact
sets $C_1 \zun\dots\zun C_\zlma$ where $\zlma$ is the order of $G$ and each
$\zp\co C_j \zfl C$ a homeomorphism. Notice that $P-C$ is connected and dense in
$P$, and $M-\zp^{-1}(C)$ is connected, dense in $M$ and $G$-invariant.

Now construct a suitable vector field $X$ on $M-\zp^{-1}(C)$. Since
$\zp^{-1}(C-\{a\})$ is closed in $M'$, a vector field $\widehat X$ that vanishes on
$\zp^{-1}(C-\{a\})$ and equals $\zf X$ on $M-\zp^{-1}(C)$ for a suitable function
$\zf\co M'\zfl\RR$ can be constructed as in Section \ref{sec-4}.

On the other hand the flow $\widehat\zF$ of $\widehat X$ can be extended into a continuous flow
$\widetilde\zF\co\RR\zpor M\zfl M$ by setting $\widetilde\zF(\RR\zpor\{b\})=b$ for every
$b\zpe\zp^{-1}(a)$. Accept this fact by the moment; we will prove it later on. If
$f\co M\zfl M$ is a continuous automorphism of $\widetilde\zF$, then
$f(\zp^{-1}(C))=\zp^{-1}(C)$ by the same reason as in Section \ref{sec-4} (replace
singularities of $\widehat X$ by stationary points of the flow $\widetilde\zF$ and take into account
that any neighborhood of any point of $\zp^{-1}(C)$ contains uncountably many points of
$\zp^{-1}(C)$). Therefore $f\co M-\zp^{-1}(C)\zfl M-\zp^{-1}(C)$ is a continuous
automorphism of $\widehat X$ and hence $f=g\zci\widehat\zF_t =g\zci\widetilde\zF_t$ on
$M-\zp^{-1}(C)$ for some $g\zpe G$ and $t\zpe\RR$. Since $M-\zp^{-1}(C)$ is dense
in $M$, by continuity  $f=g\zci\widetilde\zF_t$ everywhere.

Clearly if  $g\zci\widetilde\zF_t =Id$ then $g=e$ and $t=0$.

In short, {\em even if $M$ has no smooth structure, the continuous flow $\widetilde\zF$
determines $G$.}

Let us prove that $\widetilde\zF$ is a continuous flow. The only difficult point is the continuity.
Note that as $\widehat\zF$ is $G$-invariant there is a smooth flow $\zq\co\RR\zpor P'\zfl P'$
such that $\zp\zci\widehat\zF=\zq\zci(Id\zpor\zp)$ (it is the flow associated to the projection of
$\widehat X$  onto $P'$). Denote by $\widetilde\zq\co\RR\zpor P\zfl P$ the extension of $\zq$ defined
by setting $\widetilde\zq(\RR\zpor\{a\})=a$. Observe that $\zp\zci\widetilde\zF=\widetilde\zq\zci(Id\zpor\zp)$.

\begin{lemma}\mylabel{lem-7}
$\widetilde\zq$ is continuous.
\end{lemma}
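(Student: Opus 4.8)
The plan is to prove continuity of $\widetilde\zq$ by reducing it to the single problematic point $a\zpe P$, since $\widetilde\zq=\zq$ on the open $\widetilde\zq$-invariant set $\RR\zpor P'$ where continuity is already known (there $\zq$ is a smooth flow). The only place continuity can fail is therefore at points of $\RR\zpor\{a\}$, and since $\widetilde\zq_t (a)=a$ for all $t$, what must be shown is that if $(t_n ,b_n )\zfl (t_0 ,a)$ in $\RR\zpor P$ then $\widetilde\zq_{t_n}(b_n )\zfl a$. First I would fix the topological coordinates centered at $a$ in which $C$ is a small closed segment with $a$ as a vertex; the key structural fact coming from the construction of $\widehat X$ (via Section \ref{sec-4}) is that the singularities of $\widehat X$ accumulate towards the stationary set, so the dynamics of $\zq$ near $a$ is ``trapping'': trajectories starting near $a$ cannot escape a prescribed small neighborhood in a controlled time.

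The core of the argument is a trapping estimate. I would fix an arbitrary neighborhood $V$ of $a$ in $P$ and produce a smaller neighborhood $W\zco V$ together with a bound so that for all $b\zpe W$ and all $|t|$ in a fixed compact interval one has $\zq_t (b)\zpe V$ (and $\widetilde\zq_t (a)=a\zpe V$ trivially). The mechanism is that $a$ is an accumulation point of stationary points of $\widetilde\zq$ (the image under $\zp$ of the accumulation of zeros of $\widehat X$), and $\widehat X=\zf X$ vanishes to high order near $\zp^{-1}(a)$ because $\zf$ does; hence the flow moves points arbitrarily slowly as they approach $a$. Concretely I would use that the boundedness and vanishing properties of $\zf$ built in Section \ref{sec-4} force the speed $\|\widehat X\|$ to tend to $0$ at $\zp^{-1}(a)$ fast enough that, on a fundamental system of neighborhoods of $a$, no trajectory can cross from an inner shell to the boundary of $V$ within the fixed time interval. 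This yields the desired $W$, and then for $(t_n ,b_n )\zfl(t_0 ,a)$ one gets $\widetilde\zq_{t_n}(b_n )\zpe V$ eventually; letting $V$ shrink gives $\widetilde\zq_{t_n}(b_n )\zfl a=\widetilde\zq_{t_0}(a)$, which is continuity at $(t_0 ,a)$.

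The hard part will be making the trapping estimate rigorous at the single non-smooth point $a$, where we have only topological coordinates and no a priori control on $\widehat X$ since $\widehat X$ is merely defined on $M'$ and need not extend smoothly (or at all) across $\zp^{-1}(a)$. The delicate issue is to transfer the ``slow flow near the stationary set'' idea into an honest neighborhood-and-time estimate using only the qualitative data available: that $\zp^{-1}(a)$ consists of isolated stationary points of $\widetilde\zF$ each of whose neighborhoods contains uncountably many stationary points, and that $\zf$ is bounded and vanishes on the closed set $\zp^{-1}(C-\{a\})\zun\zp^{-1}(a)$. I would handle this by working downstairs in $P$, where $\zq$ is a genuine smooth flow on $P'$ with stationary set accumulating at $a$, and invoking that a point cannot pass through the (topologically separating) accumulating family of fixed points in finite time; this converts the obstruction into a connectedness/separation argument analogous to the one used for $D_1 \zin D_2$ in the proof of Lemma \ref{lem-5}, avoiding any need to differentiate at $a$.
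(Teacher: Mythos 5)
Your reduction of the problem to continuity at the points $(t_0 ,a)$, and your identification of what must be proved (for every neighborhood $V$ of $a$ there is a neighborhood $W$ with $\zq_t (W-\{a\})\zco V$ for all $t$ in a fixed compact interval), are both correct. The gap lies in the two mechanisms you offer for this trapping property, neither of which works. First, there is no ``slow flow near $a$'' estimate available: $\zf$ is required to vanish on $\zp^{-1}(C-\{a\})$, a one\-dimensional set, and nothing in the construction forces $\widehat X$ to become small on a full neighborhood of $\zp^{-1}(a)$; indeed no smooth structure, metric or coordinate control exists at $\zp^{-1}(a)$, so ``$\|\widehat X\|\zfl 0$ at $\zp^{-1}(a)$'' is not even a well-posed claim there, let alone a consequence of the construction. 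Second, the fallback separation argument fails for a concrete reason: the stationary set of $\zq$ accumulating at $a$ is essentially the segment $C-\{a\}$, which has dimension one inside the $4$-manifold $P$ and therefore separates no neighborhood of $a$; a trajectory is free to travel around it. The analogy with the $D_{1}\zin D_{2}$ argument of Lemma \ref{lem-5} is inverted: there the point was that small sets \emph{cannot} disconnect a manifold, which is the opposite of the obstruction you need here.

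What actually makes the trapping property true is compactness of $P$ together with time-reversibility of the flow, and your proposal never invokes compactness. If $b_n \zfl a$, $t_n \zfl t_0$ and $\zq_{t_n}(b_n )$ did not converge to $a$, a subsequence would converge (by compactness of $P$) to some $c\znoi a$, hence $c\zpe P'$; applying the continuous flow on $P'$ backwards gives $b_n =\zq_{-t_n}\big(\zq_{t_n}(b_n )\big)\zfl\zq_{-t_0}(c)\zpe P'$, contradicting $b_n \zfl a$. The paper packages exactly this properness phenomenon via the Alexandroff compactification: it identifies $P$ with $\A(P')$, notes that $h(t,x)=(t,\zq(t,x))$ is a homeomorphism of $[\za,\zb]\zpor P'$ and hence proper, and writes $\widetilde\zq$ on $[\za,\zb]\zpor P$ as a composition of continuous extensions to one-point compactifications. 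Either formulation is fine, but compactness of $M$ (hence of $P$) is the essential hypothesis and must appear; as written, your argument replaces it with dynamical estimates that the construction does not provide.
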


\begin{proof}
As before the difficult point is the continuity. For checking it one will show that
$\widetilde\zq\co[a,b]\zpor P\zfl P$ is continuous for any $a<b$ belonging to $\RR$.

Consider a map $s\co E_1 \zfl E_2$ between locally compact but not compact topological
spaces. Denote by $\A(E_k )$, $k=1,2$, the Alexandroff compactification of $E_k$ and by
$\A(s)\co\A(E_1 )\zfl\A(E_2 )$ the extension of $s$ that maps the infinity point of $\A(E_1 )$
to that of $\A(E_2 )$. Recall that $\A(s)$ is continuous if and only if $s$ is proper.

The map $h\co[a,b]\zpor P'\zfl [a,b]\zpor P'$ given by $h(t,x)=(t,\zq(t,x))$ is a homeomorphism and
hence $\A(h)\co\A([a,b]\zpor P')\zfl\A([a,b]\zpor P')$ is continuous.

In turn the second projection $\zp_2 \co[a,b]\zpor P'\zfl P'$ is proper, so
$\A(\zp_2 )\co\A([a,b]\zpor P')\zfl\A(P')$ is continuous.

Since the Alexandroff compactification is the smallest one among the Hausdorff compactifications,
the map $g\co[a,b]\zpor\A(P')\zfl\A([a,b]\zpor P')$ that equals the identity on
$[a,b]\zpor P'$ and maps $[a,b]\zpor\{\zinf\}$ to $\zinf$ is continuous.

Finally if one identifies $P$ to $\A(P')$ by regarding $a$ like the infinity point, then
$\widetilde\zq=\A(\zp_2 )\zci\A(h)\zci g$.
\end{proof}

\begin{corollary}\mylabel{cor-3}
$\widetilde\zF$ is continuous.
\end{corollary}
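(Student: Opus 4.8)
The plan is to verify continuity of $\widetilde\zF$ only at the points where it is not already manifest, namely the pairs $(t_0 ,b_0 )$ with $b_0 \zpe\zp^{-1}(a)$; everywhere else $\widetilde\zF$ coincides with the smooth flow $\widehat\zF$ of $\widehat X$ on the open set $\RR\zpor M'$. The whole argument rests on two facts. First, $\zp\co M\zfl P$ is a topological covering over \emph{all} of $P$ (including $a$), because $G$ is finite and acts freely; the Quinn smooth structure on $P'$ plays no role here. Second, the intertwining relation $\zp\zci\widetilde\zF=\widetilde\zq\zci(Id\zpor\zp)$ combined with the continuity of $\widetilde\zq$ from Lemma \ref{lem-7}. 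The key observation is that each trajectory $s\mapsto\widetilde\zF(s,c)$ is the lift through $c$ of the $\widetilde\zq$-trajectory $s\mapsto\widetilde\zq(s,\zp(c))$, so I can control $\widetilde\zF$ near $b_0$ by controlling the lift of a flow-path of $\widetilde\zq$ that stays close to the stationary point $a$.

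Concretely, I would fix an evenly covered neighborhood $U$ of $a$ in $P$, write $\zp^{-1}(U)$ as a disjoint union of sheets $V_1 ,\dots,V_\zlma$, each mapped homeomorphically onto $U$ by $\zp$, and let $V_{j_0}$ be the sheet containing $b_0$ (so $V_{j_0}\zin\zp^{-1}(a)=\{b_0\}$). Put $T=\zbv t_0\zbv +1$. Since $\widetilde\zq\big([-T,T]\zpor\{a\}\big)=\{a\}\zco U$, the tube lemma together with the continuity of $\widetilde\zq$ furnishes a neighborhood $U'\zco U$ of $a$ with $\widetilde\zq\big([-T,T]\zpor U'\big)\zco U$. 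I then set $W=(t_0 -1,t_0 +1)$ and $V'_{j_0}=(\zp_{\zbv V_{j_0}})^{-1}(U')$, a neighborhood of $b_0$.

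The heart of the matter is to show that $\widetilde\zF(W\zpor V'_{j_0})\zco V_{j_0}$. Given $(t,c)\zpe W\zpor V'_{j_0}$, the path $\zs\co[0,1]\zfl P$ defined by $\zs(s)=\widetilde\zq(st,\zp(c))$ lands entirely in $U$, because $st\zpe[-T,T]$ and $\zp(c)\zpe U'$; on the other hand $s\mapsto\widetilde\zF(st,c)$ is a continuous lift of $\zs$ starting at $c\zpe V_{j_0}$ (continuous because for $c\znoi b_0$ it is the smooth trajectory $\widehat\zF(\cdot ,c)$, which never leaves $M'$, while for $c=b_0$ it is constant). Since $\zs$ lies in the evenly covered set $U$ and starts in the sheet $V_{j_0}$, uniqueness of path lifting forces this lift to stay in $V_{j_0}$, so $\widetilde\zF(t,c)\zpe V_{j_0}$ at $s=1$. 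Consequently, on the neighborhood $W\zpor V'_{j_0}$ of $(t_0 ,b_0 )$ one has $\widetilde\zF=(\zp_{\zbv V_{j_0}})^{-1}\zci\widetilde\zq\zci(Id\zpor\zp)$, a composite of continuous maps, and hence $\widetilde\zF$ is continuous at $(t_0 ,b_0 )$. I expect the only delicate step to be exactly this no-jump-of-sheets claim: one must keep the \emph{entire} flow-path from time $0$ to time $t$ inside a single evenly covered $U$, which is precisely what the uniform estimate $\widetilde\zq([-T,T]\zpor U')\zco U$ guarantees, after which uniqueness of lifting over the covering $\zp$ does the rest.
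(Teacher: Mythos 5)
Your argument is correct, but it reaches the conclusion by a genuinely different route from the paper's. The paper works globally: it sets $l(t,x)=\widetilde\zq(t,\zp(x))$, observes that $l$ is homotopic to $(t,x)\mapsto\zp(x)$ because $\RR$ is contractible, invokes the lifting criterion for the covering $\zp\co M\zfl P$ to obtain a continuous lift $L\co\RR\zpor M\zfl M$ of $l$, and then identifies $L$ with $\widetilde\zF$ (on $\RR\zpor M'$ because both are lifts of $l$ agreeing at a point, and on $\RR\zpor\zp^{-1}(a)$ by continuity, using that $L=Id$ on $\RR\zpor\zp^{-1}(C-\{a\})$). You instead verify continuity only at the genuinely problematic points $(t_0 ,b_0 )$ with $b_0 \zpe\zp^{-1}(a)$, trapping the entire flow segment from time $0$ to time $t$ inside one sheet $V_{j_0}$ of an evenly covered neighborhood via the tube lemma applied to $\widetilde\zq$, and then exhibiting $\widetilde\zF$ locally as the composite $(\zp_{\zbv V_{j_0}})^{-1}\zci\widetilde\zq\zci(Id\zpor\zp)$. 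Both proofs rest on exactly the same two inputs --- Lemma \ref{lem-7} and the intertwining relation $\zp\zci\widetilde\zF=\widetilde\zq\zci(Id\zpor\zp)$ --- but yours is more elementary (no fundamental groups, only the tube lemma and connectedness of lifted paths), and it makes the local picture near the added stationary points explicit, whereas the paper's global lift produces the continuous extension in a single stroke without any sheet-trapping estimate. A minor simplification: the appeal to uniqueness of path lifting is not needed; since the lifted path is connected, lies in $\zp^{-1}(U)=\bigsqcup_{j}V_{j}$, and meets $V_{j_0}$, it must stay in $V_{j_0}$.
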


\begin{proof}
Consider the map $l \co\RR\zpor M\zfl P$ given by $l(t,x)=\widetilde\zq(t,\zp(x))$ and
a point $v\zpe M'$. As $\RR$ is contractile, then $l$ is homotopic to the map
$$\begin{array}{rcl}
\RR\zpor M & \to & P\\
(t,x) & \mapsto & \zp(x)
\end{array}$$
Therefore
$l_\sharp \big(\zp_1 (\RR\zpor M,(0,v))\big)=\zp_\sharp\big(\zp_1 (M,v)\big)$ and hence, with respect to the
covering $\zp\co M\zfl P$, there exists
a lift $L\co\RR\zpor M\zfl M$ of $l$, with initial condition $L(0,v)=v$. Moreover
$L=\widetilde\zF$ on $\RR\zpor M'$ since one knows that $\widetilde\zF$ is continuous
on $\RR\zpor M'$ and $\zp\zci\widetilde\zF=\widetilde\zq\zci(Id\zpor\zp)$.

Finally as $\widehat X$ vanishes on $\zp^{-1}(C-\{a\})$, it follows that $L=\widetilde\zF=Id$ on
$\RR\zpor\zp^{-1}(C-\{a\})$. By continuity $L(\RR\zpor\{b\})=b$ for each
$b\zpe\zp^{-1}(a)$. Thus $L=\widetilde\zF$ everywhere.
\end{proof}

\section{Appendix} \mylabel{sec-A}

In this section $B(r)$, $r>0$, will be the ball in $\RR^m$, endowed with coordinates
$(x_1 ,\dots,x_m )$, of center the origin and radius
$r$, and $\zG\co\RR^m \zfl\RR^m$ the symmetry given by
$\zG(x_1 ,\dots,x_m )=(x_1 ,\dots,x_{m-1},-x_m )$.

\begin{lemma}\mylabel{leA-1}
Consider a function $\zm$ defined around the origin of $\RR^m$ and $\zG$-invariant.
Assume that the origin is a non-degenerated singularity. Then about the origin there exist
coordinates $(y_1 ,\dots,y_m )$ such the coordinates of the origin are still $(0,\dots,0)$,
$$\zm=\zsu_{j=1}^{k}y_{j}^{2}
-\zsu_{j=k+1}^{m-1}y_{j}^{2}+\ze y_{m}^2 +\zm(0)$$
where $\ze=\zmm 1$, and $\zG(y_1 ,\dots,y_m )=(y_1 ,\dots,y_{m-1},-y_m )$.
\end{lemma}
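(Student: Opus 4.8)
The plan is to prove an equivariant version of the Morse lemma: I will first isolate the reflected coordinate $x_m$ and then apply the classical Morse lemma to an $(m-1)$-variable residual. The guiding constraint is that every coordinate change must commute with $\zG$; since $\zG$ negates only $x_m$, this forces the new last coordinate to be an \emph{odd} function of $x_m$, while the changes among $x_1,\dots,x_{m-1}$ must not involve $x_m$ at all.

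First I would record the consequences of $\zG$-invariance. Since $\zm\zci\zG=\zm$, the function $\zm$ is even in $x_m$, so all its odd-order partial derivatives in $x_m$ vanish on $\{x_m=0\}$; in particular the Hessian of $\zm$ at the origin is block diagonal, with no coupling between the $x_m$-direction and the hyperplane $\{x_m=0\}$. Hence non-degeneracy of the origin is equivalent to $\zpar_{x_m}^2\zm(0)\znoi 0$ together with non-degeneracy of the origin for the restriction $g\co=\zm(x_1,\dots,x_{m-1},0)-\zm(0)$.

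The key step is to complete the square in $x_m$ equivariantly. I would write
$$\zm(x)-\zm(0)=\big[\zm(x',0)-\zm(0)\big]+\big[\zm(x)-\zm(x',0)\big],\qquad x'=(x_1,\dots,x_{m-1}).$$
Applying Hadamard's lemma twice in the variable $x_m$ and using that $\zpar_{x_m}\zm$ is odd in $x_m$, the second bracket factors as $x_m^2\,\tilde G(x)$ with $\tilde G$ smooth, even in $x_m$, and $\tilde G(0)=\tfrac12\zpar_{x_m}^2\zm(0)\znoi 0$. Setting $\ze=\zmm 1$ according to the sign of $\tilde G(0)$ and $y_m=x_m\sqrt{\zbv\tilde G(x)\zbv}$ produces a function that is smooth near the origin (because $\tilde G$ has constant sign there) and odd in $x_m$, and the map $(x',x_m)\mapsto(x',y_m)$ has invertible differential at $0$ (its Jacobian there is block diagonal with bottom entry $\sqrt{\zbv\tilde G(0)\zbv}$), so it is a $\zG$-equivariant local diffeomorphism. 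In these coordinates $\zm-\zm(0)=g(x')+\ze\,y_m^2$, where $g$ depends only on $x_1,\dots,x_{m-1}$.

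Finally, I would apply the classical Morse lemma to $g$ on $\RR^{m-1}$ to obtain coordinates $(y_1,\dots,y_{m-1})$ with $g=\zsu_{j=1}^{k}y_j^2-\zsu_{j=k+1}^{m-1}y_j^2$, and extend this change by the identity in the last slot. Because it leaves $y_m$ untouched and does not involve $x_m$, it commutes with $\zG$. Composing the two equivariant coordinate changes yields $\zm=\zsu_{j=1}^{k}y_j^2-\zsu_{j=k+1}^{m-1}y_j^2+\ze y_m^2+\zm(0)$ with $\zG$ still acting as $(y_1,\dots,y_m)\mapsto(y_1,\dots,y_{m-1},-y_m)$. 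The main obstacle to watch is precisely the equivariance bookkeeping---keeping $y_m$ odd and the remaining change $x_m$-free---rather than the analysis, which reduces to standard Hadamard and Morse arguments.
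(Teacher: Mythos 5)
Your proof is correct and follows essentially the same route as the paper's: both factor the transverse part as $x_m^2$ times an even, nonvanishing coefficient (Taylor/Hadamard), set $y_m=x_m\zbv f\zbv^{1/2}$ to get an odd new last coordinate, and apply the classical Morse lemma to the restriction to $\{x_m=0\}$ extended by the identity in the last slot. The only difference is the order of the two steps, which is immaterial; your explicit remark that the Hessian is block diagonal is the justification the paper leaves implicit for the non-degeneracy of the restriction.
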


\begin{proof}
As $\zm$ is $\zG$-invariant its restriction to the hyperplane $H$ defined by $x_m =0$
has a non-degenerated singularity at the origin. Therefore coordinates $(x_1 ,\dots,x_m )$
can be replaced by coordinates $(y_1 ,\dots,y_{m-1},x_m )$ in such a way that
$\zG(y_1 ,\dots,y_{m-1},x_m )=(y_1 ,\dots,y_{m-1},-x_m )$ and
$$\zm(y_1 ,\dots,y_{m-1},0)=\zsu_{j=1}^{k}y_{j}^{2}
-\zsu_{j=k+1}^{m-1}y_{j}^{2}+\zm(0).$$

On the other hand from the Taylor expansion in variable $x_m$ transversely
to $H$ it follows
$$\zm(y_1 ,\dots,y_{m-1},x_m )=\zm(y_1 ,\dots,y_{m-1},0) \hskip 3truecm$$
$$\hskip 2.5truecm +x_m {\frac {\zpar\zm} {\zpar x_m}}(y_1 ,\dots,y_{m-1},0)
+x_{m}^2 f(y_1 ,\dots,y_{m-1},x_m ).$$
\vskip .2truecm

By the $\zG$-invariance ${\frac {\zpar\zm} {\zpar x_m}}
(y_1 ,\dots,y_{m-1},0)=0$ and $f(y_1 ,\dots,y_{m-1},-x_m )
= f(y_1 ,\dots,y_{m-1},x_m )$. Moreover
$2f(0)={\frac {\zpar^2 \zm} {\zpar x_{m}^2}}(0)\znoi 0$ since the origin
is a non-degenerated singularity. Therefore close to the origin
$(y_1 ,\dots,y_{m-1},y_m )$, where $y_m =x_m \zbv
f(y_1 ,\dots,y_{m-1},x_m )\zbv^{1/2}$, is a system of coordinates as
required.
\end{proof}

\begin{proposition} \mylabel{prA-1}
For every $r>0$  there exists a Morse function $\zt\co\RR^m \zfl\RR$ such that:

\begin{enumerate}[label={\rm (\alph{*})}]
\item\mylabel{especialAa} $\zt$ is $\zG$-invariant.

\item\mylabel{especialAb} If $p$ is a minimum of $\zt$ then $\zG(p)\znoi p$,
that is to say $p$ does not belong to the hyperplane $x_m =0$.

\item\mylabel{especialAc} $\zbv\zt(x)\zbv\zmei \zdbv x\zdbv^2$ on $\RR^m$ and
$\zt(x)=\zdbv x\zdbv^2$ on $\RR^m -B(r)$.
\end{enumerate}
\end{proposition}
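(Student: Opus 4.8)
The plan is to look for $\tau$ of the special form $\tau(x)=G(\|x'\|^2,x_m^2)$, where $x'=(x_1,\dots,x_{m-1})$ and $G$ is a smooth function on the quadrant $\{u\geq 0,\ v\geq 0\}$ of the $(u,v)$-plane (extended smoothly to $\RR^2$). Writing $u=\|x'\|^2$ and $v=x_m^2$ makes $\tau$ automatically even in $x_m$, hence $\Gamma$-invariant, so \ref{especialAa} is free, and it turns $\|x\|^2$ into $u+v$; thus \ref{especialAc} becomes $G(u,v)=u+v$ for $u+v\geq r^2$ together with $|G(u,v)|\leq u+v$. The geometric target is to deform the single well $u+v$ so that the origin becomes a saddle whose unique descending direction is along $x_m$, producing two symmetric non-degenerate minima at $\pm s_0 e_m$ off the hyperplane $x_m=0$; this will give \ref{especialAb}.

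First I would fix the one-variable profile $g(v):=G(0,v)$ on $[0,\infty)$. I would take $g$ smooth with $g(0)=0$, $g'(0)\in(-1,0)$, having exactly one critical point in $(0,\infty)$, a non-degenerate minimum at some $v=s_0^2$ with $g(s_0^2)<0$, and with $g(v)=v$ for $v\geq s_1$, where $0<s_0^2<s_1<r^2$. The condition $g'(0)<0$ makes $v=0$ a descent, and a single well in the variable $v=x_m^2$ then produces the two minima $x_m=\pm s_0$ while leaving $x_m=0$ a local maximum in that direction. Keeping the well shallow I would simultaneously arrange $|g(v)|\leq v$, which only asks that $|g'(0)|<1$ near $0$ and that the dip be no deeper than $-v$.

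Next I would spread this profile into $u>0$ by setting $G(u,v)=u+v+\rho(u)\,(g(v)-v)$, where $\rho\colon\RR\to\RR$ is a cut-off equal to $1$ near $u=0$ and supported in $[0,s_2]$, with $s_1+s_2<r^2$. Then $\rho(u)(g(v)-v)$ is supported in the rectangle $[0,s_2]\times[0,s_1]\subset\{u+v<r^2\}$, so $G=u+v$ outside $B(r)$, which is half of \ref{especialAc}; and since $g(v)-v\leq 0$ with $0\leq\rho\leq 1$ one has $g(v)-v\leq\rho(u)(g(v)-v)\leq 0$, whence $-2(u+v)\leq G-(u+v)\leq 0$ and $|G|\leq u+v$. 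A direct computation gives $\partial_{x_i}\tau=2G_u x_i$ for $i<m$ and $\partial_{x_m}\tau=2G_v x_m$, so the critical set is governed by the signs of $G_u$ and $G_v$, and on $u=0$ by $g'$.

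The main obstacle is ensuring that $\tau$ is genuinely Morse. Because $\tau$ depends on $x'$ only through $\|x'\|^2$, any critical point with $x'\neq 0$ would sweep out a whole sphere of critical points and destroy non-degeneracy; the remedy is to force $G_u>0$ everywhere, so that $\nabla\tau=0$ compels $x'=0$. Here $G_u=1+\rho'(u)(g(v)-v)$, and since $|g(v)-v|\leq 2s_1$ while $\|\rho'\|_\infty$ is of order $1/s_2$, choosing the well much narrower than the cut-off (say $s_1<s_2/(2C)$ for the constant $C$ with $\|\rho'\|_\infty\leq C/s_2$) keeps $\rho'(u)(g(v)-v)>-1$, hence $G_u>0$; reconciling this shallow-versus-narrow balancing with the matching outside $B(r)$ and the bound $|\tau|\leq\|x\|^2$ is the delicate point. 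With $G_u>0$ the only critical points are those with $x'=0$ and either $x_m=0$ or $g'(x_m^2)=0$, that is, the origin and $\pm s_0 e_m$. Finally I would read off the (block-diagonal) Hessians: at $\pm s_0 e_m$ the $x'$-block is the positive-definite matrix $2G_u\,I_{m-1}$ and the $x_m$-entry is $4s_0^2 g''(s_0^2)>0$, so these are non-degenerate minima, each with $x_m\neq 0$ and hence $\Gamma(p)\neq p$; at the origin the $x'$-block $2G_u(0,0)I_{m-1}$ is positive definite while the $x_m$-entry is $2g'(0)<0$, so the origin is a non-degenerate saddle and not a minimum. Thus $\tau$ is a $\Gamma$-invariant Morse function satisfying \ref{especialAa}--\ref{especialAc}.
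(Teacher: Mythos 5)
Your construction is correct, and it reaches the same geometric picture as the paper --- a $\Gamma$-invariant (indeed $O(m-1)\times\ZZ_2$-invariant) function equal to $\|x\|^2$ outside $B(r)$, with a non-degenerate saddle at the origin whose descending direction is $x_m$ and two symmetric non-degenerate minima on the $x_m$-axis --- but by a genuinely different formula. The paper takes the single expression $\tau(x)=\|x'\|^2+x_m^2\,\rho(\|x\|^2)$, i.e.\ $G(u,v)=u+v\rho(u+v)$ in your variables, with $\rho$ engineered in Lemma \ref{leA-2}; the monotonicity $\rho'\geq 0$ forces critical points onto the last axis, and the one-variable analysis on the axis is delegated to Corollary \ref{coA-1} about $\lambda(t)=t\rho(t)$. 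You instead separate the variables, $G(u,v)=u+v+\rho(u)(g(v)-v)$, which decouples the radial cutoff from the double-well profile and makes the block-diagonal Hessian computation completely transparent, at the price of juggling the parameters $s_0,s_1,s_2$. Two remarks. First, the ``delicate balancing'' $s_1<s_2/(2C)$ that you flag as the main obstacle is not actually needed: if you take the cutoff $\rho$ monotone decreasing, then $\rho'(u)\leq 0$ and $g(v)-v\leq 0$ give $\rho'(u)(g(v)-v)\geq 0$, hence $G_u\geq 1$ outright. Second, the existence of your profile $g$ (smooth, $g(0)=0$, $g'(0)\in(-1,0)$, a single non-degenerate interior minimum, $|g(v)|\leq v$, and $g(v)=v$ for $v\geq s_1$) is asserted rather than constructed; it is elementary, but note that writing $g(v)=v\sigma(v)$ turns it into exactly the statement the paper proves in Lemma \ref{leA-2} and Corollary \ref{coA-1}, so that lemma is where the residual work in your approach also lives.
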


For proving the foregoing proposition we need:

\begin{lemma}\mylabel{leA-2} There exists a smooth function $\zr\co\RR\zfl\RR$
such that

\begin{enumerate}[label={\rm (\alph{*})}]
\item\mylabel{especialBa} $\zr(t)=1$ if $t\zmai 1$, $\zr(t)=-1$ if $t\zmei 0$,
$\zr(1/2)=0$, $0<\zr<1$ on $(1/2,1)$ and $-1<\zr<0$ on $(0,1/2)$. Moreover
$\zr(1-t)=-\zr(t)$, $t\zpe\RR$, that t is to say $\zr$ is anti-symmetrical with
respect to $t=1/2$.
\item\mylabel{especialBb} $\zr\,'\zmai 0$ on $\RR$ and $\zr\,'>0$ on $(0,1)$.
Moreover $\zr\,'(1-t)=\zr\,'(t)$, $t\zpe\RR$.
\item\mylabel{especialBc} $\zr\,''>0$ on $(0,1/2)$, $\zr\,''<0$ on $(1/2,1)$ and
$\zr\,''=0$ on $(\RR-(0,1))\zun\{1/2\}$. Moreover
 $\zr\,''(1-t)=-\zr\,''(t)$, $t\zpe\RR$.
\end{enumerate}
\end{lemma}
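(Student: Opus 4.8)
The plan is to obtain $\zr$ as the integral of its own derivative, reducing everything to the construction of a single smooth bump $\zb$ that will serve as $\zr\,'$. First I would set
$$\zb(t)=c\,\exp\zpizq-\frac{1}{t(1-t)}\zpder\quad\textrm{for }t\zpe(0,1),\qquad \zb(t)=0\quad\textrm{otherwise},$$
with a constant $c>0$ fixed below. Since $t(1-t)$ vanishes to first order at $t=0$ and $t=1$, the exponent tends to $-\zinf$ there and $\zb$ together with all its derivatives extends smoothly by $0$; thus $\zb$ is smooth on $\RR$, is positive exactly on $(0,1)$, and has all derivatives vanishing off $(0,1)$. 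Because $t(1-t)$ is invariant under $t\mapsto 1-t$, the bump is symmetric: $\zb(1-t)=\zb(t)$.

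The monotonicity of $\zb$ on the two halves of $(0,1)$ comes for free. Writing $u(t)=t(1-t)$ one computes $\zb\,'=(1-2t)\,u^{-2}\,\zb$ on $(0,1)$, so $\zb\,'>0$ on $(0,1/2)$, $\zb\,'<0$ on $(1/2,1)$, $\zb\,'(1/2)=0$, and $\zb\,'=0$ off $(0,1)$. Differentiating $\zb(1-t)=\zb(t)$ gives $\zb\,'(1-t)=-\zb\,'(t)$, so $\zb\,'$ is anti-symmetric about $t=1/2$.

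Next I would normalise by choosing $c$ so that $\zil_0^1\zb=2$ and define $\zr(t)=-1+\zil_{-\zinf}^{t}\zb(s)\,ds$, whence $\zr\,'=\zb$ and $\zr\,''=\zb\,'$. Conditions \ref{especialBb} and \ref{especialBc} then follow at once from the properties of $\zb$ and $\zb\,'$ just recorded. For \ref{especialBa}: as $\zb=0$ on $(-\zinf,0]$ we get $\zr=-1$ there, while $\zil_0^1\zb=2$ gives $\zr=1$ on $[1,\zinf)$; the symmetry of $\zb$ yields $\zil_0^{1/2}\zb=1$, so $\zr(1/2)=0$; and since $\zr$ is strictly increasing on $(0,1)$ this forces $-1<\zr<0$ on $(0,1/2)$ and $0<\zr<1$ on $(1/2,1)$. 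Finally the anti-symmetry $\zr(1-t)=-\zr(t)$ holds because the map $t\mapsto\zr(1-t)+\zr(t)$ has derivative $\zb(t)-\zb(1-t)=0$ and vanishes at $t=1/2$, hence is identically $0$.

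The argument is elementary, and the single point demanding care is the smoothness of $\zb$ at the endpoints $t=0,1$, i.e.\ that every derivative of $\exp(-1/(t(1-t)))$ extends continuously by $0$ across them. This is the classical flatness of $s\mapsto\exp(-1/s)$ as $s\zfl 0^+$ combined with the chain rule, and poses no genuine difficulty; everything else is forced by the symmetry of $\zb$ about $t=1/2$.
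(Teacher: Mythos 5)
Your proof is correct and follows essentially the same strategy as the paper's: construct $\zr$ by integrating up from a lower derivative with prescribed support, sign and symmetry about $t=1/2$, and then normalise. The only difference is that you start one level higher, taking an explicit symmetric bump for $\zr\,'$ and verifying the conditions on $\zr\,''$ by the computation $\zb\,'=(1-2t)u^{-2}\zb$, whereas the paper posits an abstract function satisfying \ref{especialBc} and integrates twice.
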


\begin{proof} Let $\zf$ be a smooth function meeting the requirements of
\ref{especialBc}. Denote by $\zf_1$ its primitive with initial condition $\zf_1  (0)=0$
and by $\zf_2$ the primitive of $\zf_1$ such that $\zf_2 (1/2)=0$.

Then $\zf_2$ is constant and positive on $[1,\zinf)$ while it is constant and negative
on $(-\zinf,0]$. Moreover $\zf_2 (0)=-\zf_2 (1)$. The function
$\zr=(\zf_2 (1))^{-1}\zf_2$ meets the requirements of the lemma (draw the graphics
of $\zf$, $\zf_1$ and $\zf_2$).
\end{proof}

\begin{corollary}\mylabel{coA-1} Consider the function $\zl$ defined by $\zl(t)=t\zr(t)$.
If $\zl'(c)=0$ then $c\zpe (0,1/2)$ and $\zl''(c)>0$.
\end{corollary}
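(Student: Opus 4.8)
The plan is to reduce everything to a direct sign analysis of the first two derivatives of $\lambda(t)=t\rho(t)$, feeding in only the sign data on $\rho,\rho',\rho''$ recorded in Lemma \ref{leA-2}. First I would write down, by the product rule,
$$\lambda'(t)=\rho(t)+t\rho'(t),\qquad \lambda''(t)=2\rho'(t)+t\rho''(t).$$
Everything then follows by evaluating these expressions region by region.

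To show that any zero $c$ of $\lambda'$ must lie in $(0,1/2)$, I would rule out the complementary set $(-\infty,0]\cup[1/2,\infty)$. On $(-\infty,0]$ property \ref{especialBa} gives $\rho=-1$, while $t\leq 0$ and $\rho'\geq 0$ (by \ref{especialBb}) give $t\rho'(t)\leq 0$, so $\lambda'(t)=-1+t\rho'(t)\leq -1<0$. On $[1,\infty)$ one has $\rho=1$, and $t\geq 1>0$ with $\rho'\geq 0$ gives $\lambda'(t)=1+t\rho'(t)\geq 1>0$. On the middle stretch $[1/2,1)$ property \ref{especialBa} gives $\rho(t)\geq 0$ (with equality only at $t=1/2$), while $t>0$ and $\rho'(t)>0$ by \ref{especialBb}, so $\lambda'(t)=\rho(t)+t\rho'(t)>0$. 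Hence $\lambda'$ is strictly negative on $(-\infty,0]$ and strictly positive on $[1/2,\infty)$, so every zero of $\lambda'$ lies in the open interval $(0,1/2)$, as claimed.

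It then remains to verify $\lambda''(c)>0$ for $c\in(0,1/2)$, and this is immediate: on this interval $\rho'(c)>0$ by \ref{especialBb} and $\rho''(c)>0$ by \ref{especialBc}, and $c>0$, so
$$\lambda''(c)=2\rho'(c)+c\rho''(c)>0.$$

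I do not expect any real obstacle here; the argument is a clean sign count. The only point deserving a moment's care is the bookkeeping at the transition points $0$, $1/2$, $1$, where $\rho$ passes between its locally constant regime (on $(-\infty,0]$ and $[1,\infty)$) and its strictly increasing, sign-changing regime on $(0,1)$. Note that the factor $t$ multiplying $\rho'$ is what lets me sidestep any delicacy about the value of $\rho'$ at the endpoints: it is enough to use $\rho'\geq 0$ together with the sign of $t$. One could moreover observe, using $\lambda'(0)=-1<0$ and $\lambda'(1/2)=\tfrac12\rho'(1/2)>0$ with the intermediate value theorem, that $\lambda'$ does in fact vanish somewhere in $(0,1/2)$, although the corollary as stated is only the conditional assertion and does not require this existence remark.
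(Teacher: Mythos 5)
Your proof is correct, and since the paper states Corollary \ref{coA-1} without proof (treating it as an immediate consequence of Lemma \ref{leA-2}), your direct sign analysis of $\lambda'(t)=\rho(t)+t\rho'(t)$ and $\lambda''(t)=2\rho'(t)+t\rho''(t)$ on the regions $(-\infty,0]$, $[1/2,\infty)$ and $(0,1/2)$ is exactly the elementary verification the authors intend. No gaps: the endpoint cases $t=0$ and $t=1/2$ are handled correctly, and your observation that $\lambda''>0$ on all of $(0,1/2)$ is in fact slightly stronger than what the corollary asserts.
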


\begin{proof}[Proof of Proposition \ref{prA-1}]
First observe that if $\zt$ is like in Proposition \ref{prA-1} for $r=1$, then for any other
$r>0$ it suffices to take $\zt_r  (x)=r^2 \zt(r^{-1}x)$.

Consider a function $\zr$ as in Lemma \ref{leA-2} and set
$\zt(x)=x^{2}_1 +\dots+x^{2}_{m-1}+x^{2}_{m}\zr(\zdbv x\zdbv^2)$. Then
$\zbv\zt(x)\zbv\zmei\zdbv x\zdbv^2$ everywhere and
$\zt(x)=\zdbv x\zdbv^2$ if $\zdbv x\zdbv\zmai 1$.
On the other hand an elementary computation making use of Corollary \ref{coA-1}
shows that the singularities of $\zt$ are always non-degenerate and belong to the
last axis, while the origin is a saddle.
\end{proof}

\begin{remark}\mylabel{reA-1}
As in one variable  between two consecutive minima
there always exists a maximum, the function $\zl$ of Corollary \ref{coA-1} has a single
singularity, which is a minimum. A more careful computation shows that function $\zt$ of the
proof of Proposition \ref{prA-1} has just three singular points: a saddle and two minima.
\end{remark}


\end{document}